\newenvironment{proof}[1][Proof]{\noindent\textit{#1.} }{\hfill  \rule{0.5em}{0.5em}}
\newtheorem{theorem}{Theorem}%[section]
\newtheorem{lm}{Lemma}
\newtheorem{thm}[theorem]{Theorem}
\newtheorem{exmp}{Example}
\newtheorem{coro}{Corollary}
\begin{document}

\title{
Iteration and iterative equation on lattices
%Solutions of polynomial-like iterative equation using Tarski's fixed point theorem
%\thanks{Supported by NSFC \# 11831012, \# 11771307 and \# 11821001}
}

\author{
Chaitanya Gopalakrishna\,$^a$,~~
Weinian Zhang\,$^b$
\vspace{3mm}\\
$^a${\small Statistics and Mathematics Unit, Indian Statistical Institute,}
\\
{\small  R.V. College Post, Bangalore-560059, India}
\\
$^b${\small School of Mathematics, Sichuan University,}
\\
{\small Chengdu, Sichuan 610064, P. R. China}
    \vspace{0.2cm}\\
    {\small cberbalaje@gmail.com (CG),~~matzwn@126.com (WZ).}}

\date{}

%%%%%%%%%%%%%%%%%%%%%%%%%%%%%%%%%%%%%%%%%%%%%%%%%%%%%%%%%%%%%%%%%%%%%%%%%%%%%

\maketitle

\begin{abstract}
In this paper we investigate iteration of maps on lattices and the corresponding polynomial-like iterative equation.
Since a lattice need not have a metric space structure,
neither the Schauder fixed point theorem nor the Banach fixed point theorem is available.
Using Tarski's fixed point theorem,
we prove the existence of order-preserving solutions
on convex complete sublattices of Riesz spaces.
Further, in $\mathbb{R}^n$ and $\mathbb{R}$, special cases of Riesz space,
we discuss upper semi-continuous solutions and integrable solutions
respectively.
Finally, we indicate more special cases of Riesz space for discussion on the iterative equation.

\vskip 0.2cm

{\bf Keywords:}
Functional equation; iteration; complete lattice; order-preserving map; Tarski's fixed point theorem.
\vskip 0.2cm

{\bf MSC(2010):}
primary 39B12; secondary 47J05; 06F20.
\end{abstract}

%\parskip 0.4cm

%\newpage

\baselineskip 16pt
\parskip 10pt

%%%***********************************************************************

\section{Introduction}

Iteration is an important operation and a standard element of most of the algorithms in the modern world.
Consider a self-map $f$ on a nonempty set $X$. The $k$-th order iterate $f^k$ is defined recursively by $f^0={\rm id}$,
the identity map on $X$, and $f^k=f\circ f^{k-1}$.
Those equations in which iteration of unknown functions is involved are called {\it iterative equations}.
In $\mathbb{R}^n$ a fundamental form of iterative equations is the following {\it polynomial-like iterative equation}
\begin{eqnarray} \label{1'}
\lambda_1f+\lambda_2f^2+\cdots+ \lambda_mf^m=F,
\end{eqnarray}
where $F$ is given and $f$ is unknown.
There are obtained many results on continuous solutions, differentiable solutions, convex solutions and
decreasing solutions (see e.g. \cite{Xu-Zhang,zhang1990,Bing-Weinian}) of \eqref{1'} for $n=1$,
and equivariant solutions (\cite{Zhang-Edinb}) for general $n$.
Equation (\ref{1'}) was also discussed by J. Tabor and M. Zoldak \cite{Tabor}
in the case that $X$ is a Banach space.

Lattice is one of the important objects of study in the order theory, which is significant in mathematics and computer science (\cite{Birkhoff, Garg}).
As defined in \cite{Szasz}, a relation $\preceq$ on a nonempty set $X$ is
called a {\it partial order} if it is
%%%reflexive, antisymmetric and transitive.
reflexive (i.e., $x\preceq x$ for all $x\in X$),
antisymmetric (i.e., $x=y$ whenever $x\preceq y$ and $y\preceq x$ in $X$),
and
transitive (i.e., $x\preceq z$ whenever  $x\preceq y$ and $y\preceq z$ in $X$).
$X$ endowed with a partial order $\preceq$
is called a {\it partially ordered set} (or simply a {\it poset}).
 For a subset $E$ of the poset $X$,
 $b\in X$ is called an {\it upper bound} (resp. a {\it lower bound}) of $E$
 if $x\preceq b$ (resp. $b\preceq x$) $\forall x\in E$.
 Further, $b$ is called the
 {\it least upper bound} or {\it supremum} (resp. {\it greatest lower bound} or {\it infimum}),
 denoted by $\sup_X E$ (resp. $\inf_X E$),
 if $b$ is an  upper bound (resp.  lower bound) of $E$ and
every upper bound (resp. lower bound) $z$ of $E$ satisfies
$b\preceq z$ (resp. $z\preceq b$).
A poset $X$ is called a {\it lattice} if $\sup_X\{x,y\}$, $\inf_X\{x,y\}\in X$ for every $x,y \in X$.
As defined in \cite{Luxemburg1971}, a real vector space $X$ with addition $+$ and scalar multiplication $\cdot$
is called an {\it ordered vector space}
if $X$ is a poset in a partial order $\preceq$ such that
\begin{description}
\item{\bf (i)} $x\preceq y$ implies $x+z\preceq y+z$ for all $z\in X$, and

\item{\bf (ii)} $x\succeq 0$ implies $\alpha\cdot x\succeq 0$  for all real number  $\alpha\ge 0$.
\end{description}
$X$ is called a {\it Riesz space} (or a {\it vector lattice}) if $X$
is both an ordered vector space and a lattice.
For convenience, we use $\alpha x$ to denote $\alpha\cdot x$ and $(X \preceq)$ to denote a lattice or a Riesz space $X$ in the partial order $\preceq$.
An important Riesz space is the real vector space
 $\mathcal{G}([a,b],\mathbb{R})$,
  consisting of all real-valued functions on
the compact interval $[a,b]$ in the partial order $\preceq$ defined by $f \preceq g$ if $f(x)\le g(x)$ for all $x\in [a,b]$.
Another example is
the real vector space $\mathbb{R}^n$ in the {\it Lexicographic order} (or the {\it  dictionary order}) $\preceq$ defined by $(x_1, x_2,\dots,x_n)\preceq(y_1,y_2,\ldots, y_n)$ if $x_k<y_k$ for the first $k\in\{1,2,\ldots,n\}$ such that $x_k\ne y_k$.
Special attentions have been paid to the study of Riesz spaces because of their extensive applications
to algebra \cite{Luxemburg1971,Sadovskii}, measure theory \cite{Schaefer,Fremlin}, functional analysis \cite{Kalton,Meyer},  operator theory \cite{Zaanen1997}, economics \cite{Aliprantis-Brown,Aliprantis}
and modelling of switching electronic circuits \cite{Stank,Whitesiti}.

In this paper we investigate equation (\ref{1'}) on a Riesz space $X$.
After discussing iteration of
order-preserving maps $f:X\to X$ (i.e., $f(x)\preceq f(y)$ whenever $x\preceq y$ in $X$)
in section 2,
we find in section 3 conditions under which equation (\ref{1'}) has an order-preserving solution
on convex complete sublattices of $X$.
Since a Riesz space need not even have the metric space structure,
neither the Schauder fixed point theorem
nor the Banach fixed point theorem is available.
This difficulty is overcome by using Tarski's fixed point theorem \cite{Tarski}.
Moreover,
we also give some results on uniqueness of solutions and remark for order-reversing cases.
In section 4
we additionally discuss
upper-semi-continuous (abbreviated as USC) solutions of \eqref{1'} on convex complete sublattices
in the special case that $X$ is the Euclidean space $\mathbb{R}^n$.
In section 5
we give existence and uniqueness for integrable solutions of \eqref{1'} on compact intervals
in the special case that $X$ is the real line $\mathbb{R}$.
Finally, in Section 6,
we demonstrate our results with examples and indicate more special cases of Riesz spaces
for further discussion.

%%%%%%%%%%%%%%%%%%%%%%%%%%%%%%%%%%%%%%%%%%%%%%%%%%%%%%%%%%%%%%%%%%%%%%%%%%%%%%%%%%%%%%%%

\section{Iteration of order-preserving self-maps}\label{sec2}

In this section, we discuss iteration
of order-preserving self-maps on a Riesz space $X$.
Let $\preceq$ denote the partial order in $X$.
As defined in \cite{Szasz}, $X$ being a lattice is said to be {\bf (i)} {\it join-complete}
if $\sup_X E \in X$ for every nonempty subset $E$ of $X$; {\bf (ii)} {\it meet-complete} if $\inf_X E \in X$
for every nonempty subset $E$ of $X$; {\bf (iii)} {\it complete} if $X$ is both join- and meet-complete.
$X$ is said
to be {\it simply ordered} (or a {\it chain}) if at least one of the relations $x\preceq y$ and $y\preceq x$ hold whenever $x,y \in X$.
Further,
a subset $E$ of $X$ is said to be {\bf (i)} a {\it sublattice} of $X$ if $E$ itself is a lattice with respect to the order inherited from $X$ (i.e., $\sup_L\{x,y\}$, $\inf_L\{x,y\}\in E$ for every $x,y \in E$); {\bf (ii)}
 {\it convex} if $\{z\in X: x\preceq z \preceq y\}\subseteq E$ whenever $x \preceq y$ in $E$;
 {\bf (iii)} a {\it complete sublattice} of $X$ if it is a complete lattice with respect to the order inherited from $X$ (i.e., $\sup_L Y, \inf_L Y\in E$ for every subset $Y$ of $E$);
 {\bf (iv)} a {\it convex complete sublattice} of $X$ if it is a complete sublattice of $X$ and convex.
Complete lattices, being a special subclass of lattices, have been studied extensively because of its applications to various other fields of mathematics (\cite{Atsumi1966,Kertz2000,Gratzer2014,Ronse1990}).

Let $\mathcal{F}(X)$  and $\mathcal{F}_{op}(X)$ denote the poset of all self-maps  and order-preserving self-maps of $X$
 respectively in the {\it pointwise partial order} $\trianglelefteq$ defined by
 \begin{eqnarray}\label{01}
 f\trianglelefteq g\quad \text{if}\quad f(x)\preceq g(x)\ \ \forall x\in X.
 \end{eqnarray}
As in \cite{Glazowska},
for $f, g\in \mathcal{F}(X)$, we say that $f$ {\it subcommutes} with $g$
if
$$
f\circ g \trianglelefteq g\circ f.
$$
For each $f\in \mathcal{F}(X)$, let $\sup f:=\sup\{f(x): x\in X\}$ and  $\inf f:=\inf\{f(x): x\in X\}$ whenever they exist.
%, and $\mathcal{R}(f)$
%denote the range of $f$.

\begin{lm}\label{Lm1}
The following assertions are true:
\begin{description}
\item[(i)] Both $\mathcal{F}(X)$ and $\mathcal{F}_{op}(X)$ are lattices in the partial order $\trianglelefteq$.
				
\item[(ii)] If $K$ is a convex complete sublattice of $X$, then $\mathcal{F}_{op}(K)$ is a complete lattice in the partial order $\trianglelefteq$.
\end{description}
\end{lm}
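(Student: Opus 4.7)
The plan is to realize joins and meets of maps pointwise, exploiting the lattice structure of $X$ in part (i) and the complete sublattice structure of $K$ in part (ii). Everything should reduce to a routine verification once the candidate sup and inf are written down.

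For part (i), given $f,g\in\mathcal{F}(X)$, I define $h(x):=\sup_X\{f(x),g(x)\}$ and $k(x):=\inf_X\{f(x),g(x)\}$; these are well-defined self-maps because $X$, being a Riesz space, is a lattice. A direct check against the definition of $\trianglelefteq$ shows that $h$ and $k$ are the supremum and infimum of $\{f,g\}$ in $(\mathcal{F}(X),\trianglelefteq)$. To descend to $\mathcal{F}_{op}(X)$, I verify that $h$ inherits order-preservation: if $x\preceq y$, then $f(x)\preceq f(y)\preceq h(y)$ and $g(x)\preceq g(y)\preceq h(y)$, whence $h(x)\preceq h(y)$; the case of $k$ is dual. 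Since any upper bound of $\{f,g\}$ in $\mathcal{F}_{op}(X)$ is a fortiori an upper bound in $\mathcal{F}(X)$, $h$ remains the join and $k$ the meet inside $\mathcal{F}_{op}(X)$.

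For part (ii), for each nonempty $\mathcal{A}\subseteq\mathcal{F}_{op}(K)$ I define
$$
F(x):=\sup_K\{f(x):f\in\mathcal{A}\},\qquad G(x):=\inf_K\{f(x):f\in\mathcal{A}\},\qquad x\in K.
$$
The values $F(x)$ and $G(x)$ lie in $K$ precisely because $K$ is a complete sublattice of $X$. The argument from part (i) extends verbatim to arbitrary index sets: for $x\preceq y$ in $K$ and any $f\in\mathcal{A}$ we have $f(x)\preceq f(y)\preceq F(y)$, so $F(y)$ is an upper bound of $\{f(x):f\in\mathcal{A}\}$ and hence $F(x)\preceq F(y)$; dually for $G$. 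Thus $F,G\in\mathcal{F}_{op}(K)$, and they are readily seen to be the supremum and infimum of $\mathcal{A}$ in $(\mathcal{F}_{op}(K),\trianglelefteq)$. The empty subset is handled by observing that the constant maps at $\inf_K K$ and $\sup_K K$ — which exist and lie in $K$ since $K$ is complete — serve as the bottom and top of $\mathcal{F}_{op}(K)$.

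No step looks genuinely difficult; the one real subtlety is ensuring that the pointwise suprema actually land in $K$, which is exactly what the complete-sublattice hypothesis supplies. I note in passing that convexity of $K$ does not appear to be needed for this lemma itself, though it presumably enters later when Tarski's fixed point theorem is invoked in Section~3.
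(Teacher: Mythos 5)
Your proof is correct and follows essentially the same route as the paper's: the paper also realizes $\sup$ and $\inf$ pointwise, taking $\Phi(x)=\sup\{f(x):f\in\mathcal{E}\}$ and $\phi(x)=\inf\{f(x):f\in\mathcal{E}\}$ for a nonempty $\mathcal{E}\subseteq\mathcal{F}_{op}(K)$, and merely asserts (without the verification you supply) that these are order-preserving and are the supremum and infimum in $\trianglelefteq$. Your added checks, your handling of the empty family, and your remark that convexity of $K$ is not used here are all accurate but do not change the argument.
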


\begin{proof}
The proof of {\bf (i)} is simple.
In order to prove result {\bf (ii)}, suppose that $K$ is a convex complete sublattice of $X$ and $\mathcal{E}$ be an arbitrary nonempty subset of $\mathcal{F}_{op}(K)$. Then the maps $\phi, \Phi:K\to K$ defined by $\phi(x)=\inf\{f(x): f\in \mathcal{E}\}$ and $\Phi(x)=\sup\{f(x): f\in \mathcal{E}\}$ are $\inf_{\mathcal{F}_{op}(K)}\mathcal{E}$ and $\sup_{\mathcal{F}_{op}(K)}\mathcal{E}$, respectively. Therefore $\mathcal{F}_{op}(K)$ is a complete lattice.
\end{proof}

\begin{lm}{\rm (Tarski \cite{Tarski})}\label{L0}
Let $(X, \preceq)$ be a complete lattice and $f\in \mathcal{F}_{op}(X)$.
Then the set of all fixed points of  $f$ is a non-empty complete sublattice of $X$.
Furthermore,
$f$ has the minimum fixed point $x_*$ and the maximum fixed point $x^*$ in $X$ given by
$x_*=\inf\{x\in X: f(x)\preceq x\}$ and
$x^*=\sup\{x\in X: x\preceq f(x)\}$.
\end{lm}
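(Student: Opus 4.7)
The plan is to follow the classical two-stage Tarski argument: first produce the distinguished minimum and maximum fixed points, then use an order-interval restriction to upgrade the whole fixed-point set to a complete sublattice.

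For the maximum fixed point, I would set $P := \{x \in X : x \preceq f(x)\}$. This set is nonempty because the bottom element $\inf_X X$ exists by completeness and trivially lies in $P$. Put $x^* := \sup_X P$, which again exists by completeness. The first key step is to verify $x^* = f(x^*)$. For each $x \in P$ one has $x \preceq x^*$, so order-preservation gives $x \preceq f(x) \preceq f(x^*)$; hence $f(x^*)$ is an upper bound of $P$, so $x^* \preceq f(x^*)$. Applying $f$ once more yields $f(x^*) \preceq f(f(x^*))$, so $f(x^*) \in P$ and therefore $f(x^*) \preceq x^*$. Antisymmetry forces $x^* = f(x^*)$. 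Any fixed point $y$ satisfies $y \preceq f(y)$, hence $y \in P$ and $y \preceq x^*$, confirming maximality. A symmetric argument using $Q := \{x \in X : f(x) \preceq x\}$ and $x_* := \inf_X Q$ produces the minimum fixed point, thereby both confirming the stated formulas and establishing nonemptiness of the fixed-point set, which I denote by $F$.

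For the complete sublattice conclusion, let $S$ be an arbitrary subset of $F$. The supremum of $S$ within $F$ need not coincide with $\sup_X S$, so the idea is to realize it as the minimum fixed point of $f$ restricted to a suitable subinterval. Setting $s := \sup_X S$, consider $K_s := \{y \in X : s \preceq y\}$, which is a complete lattice in the inherited order (suprema and infima of non-empty subsets coinciding with those in $X$, since every element of $K_s$ already dominates $s$). For any $y \in K_s$ and $x \in S$, the chain $x = f(x) \preceq f(s) \preceq f(y)$ forces $s \preceq f(y)$, so $f$ restricts to an order-preserving self-map of $K_s$. By the minimum-fixed-point result just proved, this restriction has a least fixed point $u \in K_s$. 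Any fixed point $v \in F$ satisfying $v \succeq x$ for all $x \in S$ must have $v \succeq s$, hence $v \in K_s$, and then $u \preceq v$ by minimality; this shows $u$ is the supremum of $S$ within $F$. A symmetric argument applied to $\{y \in X : y \preceq \inf_X S\}$ yields the infimum of $S$ within $F$.

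The main obstacle is conceptual rather than computational: the fixed-point set $F$ is not in general a sublattice of $X$ in the strong sense of inheriting all suprema and infima from $X$, so one must carefully distinguish $\sup_X S$ from its counterpart inside $F$. This distinction is precisely what forces the interval-restriction trick in the previous step; without it one would be tempted to claim $\sup_X S \in F$, which typically fails. Once this point is kept straight, the remainder reduces to routine bookkeeping with antisymmetry, transitivity, and the order-preservation of $f$.
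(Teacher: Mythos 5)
Your proof is correct and is precisely the classical Tarski argument that the paper itself does not reproduce but simply cites (Tarski \cite{Tarski}, with the two parts attributed to \cite{subra2000} and \cite{Gratzer1978}). Both the extremal-fixed-point computation via $P=\{x: x\preceq f(x)\}$ and $Q=\{x: f(x)\preceq x\}$ and the interval-restriction step for the completeness of the fixed-point set are the standard route, and you correctly note the key subtlety that the fixed-point set is complete in the inherited order without being closed under suprema taken in $X$ (which matches the paper's definition of complete sublattice).
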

	
The first part of this lemma can also be found in the expository article \cite{subra2000}.
The second part, showing that $\sup\{x\in X: x\preceq f(x)\}$ and $\inf\{x\in X: f(x)\preceq x\}$ are fixed points of $f$ thereby proving the existence of a fixed point, can also be found in the book \cite{Gratzer1978}.

\begin{thm}\label{L-thm}
Let $f, g \in \mathcal{F}_{op}(X)$. The following assertions are true:
	\begin{description}
		\item[(i)] $f^k\in \mathcal{F}_{op}(X)$ for each $k\in \mathbb{N}$.
		
		\item[(ii)] If $f\trianglelefteq g$, then
		$f^k\trianglelefteq g^k$ for each $k\in \mathbb{N}$.
		
		\item[(iii)] If $f$ subcommutes with $g$, then $f$ subcommutes with $g^k$ for each $k\in \mathbb{N}$.
		
		\item[(iv)] If $f$ subcommutes with $g$ and $f(x)\preceq g(x)$, then $f^k(x)\preceq g^k(x)$ for each $k\in \mathbb{N}$.
			\end{description}
\end{thm}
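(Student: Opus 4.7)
The overall plan is to prove all four assertions by induction on $k$, leveraging the composition structure and the order‑preserving property, with part (iv) combining parts (i) and (iii).

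For \textbf{(i)}, I would proceed by induction on $k$. The base case $k=1$ is the hypothesis on $f$. For the inductive step, given $x \preceq y$, apply $f^{k-1}$ first (order‑preserving by inductive hypothesis) to get $f^{k-1}(x) \preceq f^{k-1}(y)$, then apply $f$ (order‑preserving by hypothesis) to conclude $f^k(x) \preceq f^k(y)$.

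For \textbf{(ii)}, again induct on $k$, with the base case being the hypothesis $f \trianglelefteq g$. For the step, at any $x \in X$ use the chain
\[
f^k(x) = f(f^{k-1}(x)) \preceq g(f^{k-1}(x)) \preceq g(g^{k-1}(x)) = g^k(x),
\]
where the first inequality invokes $f \trianglelefteq g$ at the point $f^{k-1}(x)$, and the second uses the inductive hypothesis $f^{k-1}(x) \preceq g^{k-1}(x)$ combined with the fact that $g$ is order‑preserving.

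For \textbf{(iii)}, the base case $k=1$ is the subcommutation hypothesis. For the step, the idea is to peel off one copy of $g$:
\[
f \circ g^{k+1} = (f \circ g) \circ g^k \trianglelefteq (g \circ f) \circ g^k = g \circ (f \circ g^k) \trianglelefteq g \circ (g^k \circ f) = g^{k+1} \circ f,
\]
where the second $\trianglelefteq$ follows from the inductive hypothesis $f \circ g^k \trianglelefteq g^k \circ f$ together with $g$ being order‑preserving (so $g$ preserves $\trianglelefteq$ under left composition).

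For \textbf{(iv)}, the natural approach is again induction, using (i) and (iii) as tools. Assuming $f^{k-1}(x) \preceq g^{k-1}(x)$, I would chain three inequalities:
\[
f^k(x) = f(f^{k-1}(x)) \preceq f(g^{k-1}(x)) \preceq g^{k-1}(f(x)) \preceq g^{k-1}(g(x)) = g^k(x).
\]
The first step uses that $f$ is order‑preserving; the second applies part (iii) (subcommutation of $f$ with $g^{k-1}$) at the point $x$; the third uses the hypothesis $f(x) \preceq g(x)$ together with $g^{k-1}$ being order‑preserving (guaranteed by part (i)).

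None of the steps presents a real obstacle, since all four inductions are driven by order‑preservation and composition. The only point requiring mild care is part (iv), where one must arrange the intermediate terms so that each application uses the right monotonicity fact; in particular, one cannot push $f$ past $g^k$ directly, and the detour through $g^{k-1}(f(x))$ supplied by part (iii) is what makes the argument go through.
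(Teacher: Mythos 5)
Your proposal is correct and follows essentially the same inductive arguments as the paper; the only cosmetic differences are that in (ii) you apply $f\trianglelefteq g$ before the inductive hypothesis (the paper does the reverse), and in (iii) you peel off the outer copy of $g$ rather than the inner one, while (iv) matches the paper's chain exactly.
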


\begin{proof}
	Result	{\bf (i)} is trivial.
We prove result {\bf (ii)} by induction on $k$. Clearly, it is true for $k=1$. So, let $k>1$ and suppose that  $f, g \in \mathcal{F}_{op}(X)$ satisfy $f^{k-1}\trianglelefteq g^{k-1}$. Then for each $x\in X$, we have
	\begin{eqnarray*}
		f^k(x)&=&f(f^{k-1}(x))\\
		&\preceq &f(g^{k-1}(x))~~(\text{since}~f^{k-1}(x)\preceq g^{k-1}(x)~\text{and}~f~\text{is order-preserving})\\
		&\preceq& g(g^{k-1}(x))~~(\text{since}~f\trianglelefteq g)\\
		&=&g^k(x),
	\end{eqnarray*}
	implying that $f^k\trianglelefteq g^k$.

Result {\bf (iii)} is also proved by induction on $k$. Clearly, it is true for $k=1$. So, let $k>1$ and suppose that $f, g \in \mathcal{F}_{op}(X)$ satisfy  $f\circ g^{k-1}\trianglelefteq g^{k-1}\circ f$. Then for each $x\in X$, we have
	\begin{eqnarray*}
	f\circ g^k(x)&=&f\circ g^{k-1}(g(x))\\
	&\preceq &g^{k-1}\circ f(g(x))~~(\text{since}~f\circ g^{k-1}\trianglelefteq g^{k-1}\circ f)\\
	&=&g^{k-1}(f\circ g(x))\\
	&\preceq& g^{k-1}(g\circ f(x))~~(\text{since}~f\circ g\trianglelefteq g\circ f,~\text{and}~g^{k-1}~\text{is order-preserving by {\bf (i)}})\\
	&=&g^k\circ f(x),
\end{eqnarray*}
implying that $f\circ g^k\trianglelefteq g^k\circ f$.

Again, we prove result {\bf (iv)} by induction on $k$.
Clearly, it is true for $k=1$. So, let $k>1$ and suppose that $f, g \in \mathcal{F}_{op}(X)$ satisfy  $f\circ g\trianglelefteq g\circ f$ and $f^j(x)\preceq g^j(x)$ for $1\le j \le k-1$, where $x\in X$.
	Then we have
	\begin{eqnarray*}
		f^k(x)=f(f^{k-1}(x))&\preceq &f(g^{k-1}(x))~~(\text{since}~f~\text{is order-preserving})\\
		&\preceq&g^{k-1}(f(x))~~(\text{since}~f\circ g^{k-1}\trianglelefteq g^{k-1}\circ f~\text{by {\bf (iii)}})\\
		&\preceq &g^{k-1}(g(x))~~(\text{since}~f(x)\preceq g(x),~\text{and}~g^{k-1}~\text{is order-preserving by {\bf (i)}})\\
		&=&g^k(x),
	\end{eqnarray*}
proving the result for $k$.
\end{proof}

%%%%%%%%%%%%%%%%%%%%%%%%%%%%%%%%%%%%%%%%%%%%%%%%%%%%%%%%%%%%%%%%%%%%%%%%%%%%%%%%%%%%%%%%

\section{Order-preserving solutions}\label{sec22}

In this section, we give results on the existence and uniqueness of order-preserving solutions of \eqref{1'} on convex complete sublattices $K$
of a  Riesz space $X$.
Unless explaining particularly, let $(X, \preceq)$ be a Riesz space and $K$ a convex complete sublattice of $X$.

\begin{lm}\label{L1}
	Let $\lambda>0$, $\lambda_1\le 1$ and $\lambda_k\le 0$ for $2\le k\le m$ such that $\sum_{k=1}^{m}\lambda_k=\lambda$, and
	$F\in \mathcal{F}(K)$. Then a map $f$ is a solution
	%	(resp. unique solution)
	of the equation
		\begin{eqnarray} \label{1}
	\lambda_1f+\lambda_2f^2+\cdots+ \lambda_mf^m=\lambda F
	\end{eqnarray}
	%\begin{eqnarray} \label{1}
	%\lambda_1f+\lambda_2f^2+\cdots+ \lambda_mf^m=\lambda_0F
	%\end{eqnarray}
	in $\mathcal{F}(K)$ if and only if it is a fixed point
	%(resp. unique fixed point)
	of the operator $T:\mathcal{F}(K)\to \mathcal{F}(K)$ given by
	\begin{eqnarray}\label{T}
	Tf=\alpha_1f+\alpha_2f^2+\cdots+ \alpha_mf^m+\alpha F
	\end{eqnarray}
	where
$\alpha=\lambda$, $\alpha_1=1-\lambda_1$ and $\alpha_k=-\lambda_k$ for $2\le k\le m$.
\end{lm}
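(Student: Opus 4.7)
The plan is to verify the stated equivalence in two steps. First, I would check that the operator $T$ in \eqref{T} is well-defined, i.e. $Tf\in\mathcal{F}(K)$ whenever $f\in\mathcal{F}(K)$; this requires a real argument because convexity of $K$ is assumed only in the order-theoretic sense. Second, I would deduce the equivalence of $Tf=f$ and \eqref{1} by a direct algebraic rearrangement.

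Under the hypotheses, the coefficients $\alpha_1=1-\lambda_1$, $\alpha_k=-\lambda_k$ for $2\le k\le m$, and $\alpha=\lambda$ are all nonnegative, and a short computation shows
$$
\alpha_1+\sum_{k=2}^{m}\alpha_k+\alpha
=(1-\lambda_1)-\sum_{k=2}^{m}\lambda_k+\lambda
=1-\sum_{k=1}^{m}\lambda_k+\lambda
=1,
$$
since $\sum_{k=1}^{m}\lambda_k=\lambda$. Hence for $f\in\mathcal{F}(K)$ and $x\in K$, the value $(Tf)(x)$ is a nonnegative linear combination of the $m+1$ points $f(x),f^2(x),\ldots,f^m(x),F(x)\in K$ with coefficients summing to $1$.

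The main obstacle is therefore the well-definedness of $T$, because order-convexity of $K$ does not automatically provide closure under linear convex combinations in a general Riesz space. I would bypass this by exploiting the sublattice and completeness structure of $K$: set $u=\inf_K\{f(x),f^2(x),\ldots,f^m(x),F(x)\}$ and $v=\sup_K\{f(x),f^2(x),\ldots,f^m(x),F(x)\}$, both of which lie in $K$. Using axiom (ii) of a Riesz space to multiply each inequality $u\preceq f^k(x)\preceq v$ by $\alpha_k\ge 0$, and the inequality $u\preceq F(x)\preceq v$ by $\alpha\ge 0$, and then summing the resulting inequalities by axiom (i), one obtains $u\preceq (Tf)(x)\preceq v$ precisely because the coefficients sum to $1$. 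Order-convexity of $K$ together with $u,v\in K$ then forces $(Tf)(x)\in K$, so $Tf\in\mathcal{F}(K)$.

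With $T$ well-defined, the equivalence is routine: $Tf=f$ reads $(1-\lambda_1)f-\lambda_2 f^2-\cdots-\lambda_m f^m+\lambda F=f$, and subtracting $f$ from both sides and multiplying by $-1$ converts this to $\lambda_1 f+\lambda_2 f^2+\cdots+\lambda_m f^m=\lambda F$, which is \eqref{1}. Every step is reversible, so the fixed points of $T$ in $\mathcal{F}(K)$ coincide with the solutions of \eqref{1} in $\mathcal{F}(K)$.
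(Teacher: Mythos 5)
Your proposal is correct and follows essentially the same route as the paper: verify that the $\alpha$'s are nonnegative and sum to $1$, sandwich $(Tf)(x)$ between two elements of $K$ using the ordered-vector-space axioms, invoke order-convexity to conclude $Tf\in\mathcal{F}(K)$, and then finish with the reversible algebraic rearrangement. The only cosmetic difference is that the paper bounds $(Tf)(x)$ between $\min K$ and $\max K$ rather than between the infimum and supremum of the finite set $\{f(x),\ldots,f^m(x),F(x)\}$; both choices work for the same reason.
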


\begin{proof}
	Let $f$ be a solution of \eqref{1} in $\mathcal{F}(K)$.
		By using the assumptions on $\lambda$ and $\lambda_k$s, we see that
	\begin{eqnarray}\label{alphak ineuality}
	\alpha>0,\quad \alpha_k\ge 0~~ \text{for}~~1\le k\le m, \quad\text{and}\quad  \sum_{k=1}^{m}\alpha_k+ \alpha=1.
	\end{eqnarray}
	Let $\gamma:=\min K$ and $\Gamma:=\max K$, both of which exist since $(K, \preceq)$ is a complete lattice.
	Then for each $x\in K$, since $F(x), f^k(x) \in K$, we have $\gamma \preceq F(x)\preceq \Gamma$
	%	\begin{eqnarray*}\label{F_j}
	%\gamma_j \le F_j(x)\le \Gamma_j(x)
	%	\end{eqnarray*}
	and $\gamma \preceq f^k(x)\le \Gamma$
	%	\begin{eqnarray*}\label{f_kj}
	%	\gamma_j \le f_{k,j}(x)\le \Gamma_j(x)
	%\end{eqnarray*}
	for $1\le k \le m$. This implies by \eqref{alphak ineuality} that
	\begin{eqnarray}\label{02}
	\gamma =1\cdot \gamma=\sum_{k=1}^{m}\alpha_k\gamma+\alpha \gamma\preceq \sum_{k=1}^{m}\alpha_k f^k(x)+\alpha F(x)\preceq  \sum_{k=1}^{m}\alpha_k\Gamma+\alpha \Gamma=1\cdot \Gamma=\Gamma
	\end{eqnarray}
	for each $x\in K$. i.e.,
	$\gamma \preceq Tf(x) \preceq \Gamma$, proving that $Tf(x)\in K$ for each $x\in K$, since $\gamma,\Gamma \in K$ and $K$ is convex. Therefore $Tf$ is a self-map of $K$. Hence $T$ is self-map of $\mathcal{F}(K)$.
	 Further, for each $x\in K$, we have
	\begin{eqnarray*}
		Tf(x)&=&\sum_{k=1}^{m}\alpha_kf^k(x)+\alpha F(x)\\
		&=&(1-\lambda_1)f(x)+\sum_{k=2}^{m}(-\lambda_k)f^k(x)+\lambda F(x)\\
		&=&f(x)-\sum_{k=1}^{m}\lambda_kf^k(x)+\lambda F(x)\\
		&=&f(x)-\lambda F(x)+\lambda F(x)=f(x),
	\end{eqnarray*}
	implying that $f$ is a fixed point of $T$. This proves the ``only if'' part. The proof of ``if '' part is similar.
\end{proof}

Having the above lemma, we are ready to give the following.

\begin{theorem}\label{Thm1}
Let $\lambda>0$, $\lambda_1\le 1$, and $\lambda_k\le 0$ for $2\le k\le m$ such that $\sum_{k=1}^{m}\lambda_k=\lambda$.
If  $F\in \mathcal{F}_{op}(K)$ satisfies $\frac{1}{\lambda} \sup F\in K$,
then
the set $\mathcal{S}_{op}(K)$ of all solutions of equation \eqref{1'} in $\mathcal{F}_{op}(K)$
is a non-empty complete sublattice of $\mathcal{F}_{op}(K)$.
Furthermore,
equation \eqref{1'} has the minimum solution $f_*$ and the maximum solution $f^*$ in $\mathcal{F}_{op}(K)$  given by
\begin{eqnarray*}
&&f_*=\inf\{f\in \mathcal{F}_{op}(K):  \lambda F\trianglelefteq  \lambda_1f+\lambda_2f^2+\cdots+\lambda_mf^m\},
\\
&&f^*=\sup\{f\in \mathcal{F}_{op}(K): \lambda_1f+\lambda_2f^2+\cdots+\lambda_mf^m \trianglelefteq  \lambda F\}.
\end{eqnarray*}
\end{theorem}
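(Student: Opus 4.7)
The plan is to recast solvability of the iterative equation as a fixed-point problem for a monotone self-map of the complete lattice $(\mathcal{F}_{op}(K),\trianglelefteq)$, and then invoke Tarski's theorem (Lemma~\ref{L0}). Completeness of $(\mathcal{F}_{op}(K),\trianglelefteq)$ is already supplied by Lemma~\ref{Lm1}(ii), so the two things to produce are a suitable operator $T$ and a proof that it is an order-preserving self-map of this lattice.

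First I would introduce the operator $T$ of Lemma~\ref{L1}, namely $Tf=\alpha_1 f+\alpha_2 f^2+\cdots+\alpha_m f^m+\alpha F$ with $\alpha=\lambda$, $\alpha_1=1-\lambda_1$ and $\alpha_k=-\lambda_k$ for $2\le k\le m$, so that fixed points of $T$ in $\mathcal{F}(K)$ coincide with solutions of the iterative equation. The decisive verification is that $T$ actually restricts to a self-map of $\mathcal{F}_{op}(K)$. That $Tf$ takes values in $K$ is essentially the convex-combination bound $\min K\preceq Tf(x)\preceq \max K$ from the proof of Lemma~\ref{L1}, which uses $\sum_{k=1}^m\alpha_k+\alpha=1$ together with convexity of $K$; the hypothesis $\frac{1}{\lambda}\sup F\in K$ enters precisely to control the image of the $F$-contribution and keep the upper bound inside $K$. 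That $Tf$ is itself order-preserving whenever $f$ is, follows because each iterate $f^k$ belongs to $\mathcal{F}_{op}(K)$ by Theorem~\ref{L-thm}(i), the scalars $\alpha,\alpha_1,\ldots,\alpha_m$ are non-negative, and a non-negative linear combination of order-preserving self-maps of the Riesz space $X$ is again order-preserving.

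Next I would check monotonicity of $T$ on $(\mathcal{F}_{op}(K),\trianglelefteq)$. If $f\trianglelefteq g$ in $\mathcal{F}_{op}(K)$, then Theorem~\ref{L-thm}(ii) gives $f^k\trianglelefteq g^k$ for every $k\ge 1$, and non-negativity of the coefficients yields $Tf\trianglelefteq Tg$ term by term. With $T$ now an order-preserving self-map of the complete lattice $\mathcal{F}_{op}(K)$, Lemma~\ref{L0} delivers all the conclusions at once: the set of fixed points of $T$, which by Lemma~\ref{L1} is exactly $\mathcal{S}_{op}(K)$, is a non-empty complete sublattice of $\mathcal{F}_{op}(K)$, and its minimum and maximum are $f_*=\inf\{f\in\mathcal{F}_{op}(K):Tf\trianglelefteq f\}$ and $f^*=\sup\{f\in\mathcal{F}_{op}(K):f\trianglelefteq Tf\}$. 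A short direct calculation unwinding $Tf$ rewrites $Tf\trianglelefteq f$ as $\lambda F\trianglelefteq \sum_{k=1}^{m}\lambda_k f^k$, and dually $f\trianglelefteq Tf$ as $\sum_{k=1}^{m}\lambda_k f^k\trianglelefteq \lambda F$, producing exactly the expressions asserted for $f_*$ and $f^*$.

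The main obstacle is the self-mapping property $T(\mathcal{F}_{op}(K))\subseteq \mathcal{F}_{op}(K)$: one has to simultaneously keep the output inside $K$ and inherit order-preservation. The first half rests on $K$ being a convex complete sublattice of $X$ together with the hypothesis on $\frac{1}{\lambda}\sup F$, which are precisely the substitutes for metric or continuity structure that a general Riesz space lacks; the second half rests on the iteration results collected in Section~\ref{sec2}. Once $T$ is secured as an order-preserving self-map of $\mathcal{F}_{op}(K)$, the lattice structure of $\mathcal{S}_{op}(K)$ and the existence of the extremal solutions follow from Tarski's theorem with no further work.
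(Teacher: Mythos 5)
Your proposal follows the paper's proof essentially step for step: the same operator $T$ from Lemma~\ref{L1}, Lemma~\ref{Lm1}\,\textbf{(ii)} for completeness of $(\mathcal{F}_{op}(K),\trianglelefteq)$, Theorem~\ref{L-thm}\,\textbf{(i)}--\textbf{(ii)} for the self-mapping and monotonicity of $T$, and then Tarski's theorem (Lemma~\ref{L0}) combined with Lemma~\ref{L1}.

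The one point you blur is where the hypothesis $\frac{1}{\lambda}\sup F\in K$ actually enters. The operator of Lemma~\ref{L1}, $Tf=\sum_k\alpha_k f^k+\alpha F$ with $\alpha=\lambda$, maps $\mathcal{F}_{op}(K)$ into itself with no hypothesis on $\frac{1}{\lambda}\sup F$ whatsoever --- the bound $\min K\preceq Tf(x)\preceq\max K$ only uses $F(x)\in K$ and $\sum_k\alpha_k+\alpha=1$ --- but its fixed points solve equation \eqref{1}, i.e.\ $\sum_k\lambda_k f^k=\lambda F$, not \eqref{1'}. To solve \eqref{1'} the paper reruns the argument with $G:=\frac{1}{\lambda}F$ in place of $F$ (so that $\lambda G=F$), and it is precisely there that $\frac{1}{\lambda}\sup F\in K$, together with $\inf F\in K$ and convexity of $K$, is needed to guarantee that $G$ is a self-map of $K$, hence $G\in\mathcal{F}_{op}(K)$. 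Your sentence attributing the hypothesis to ``the image of the $F$-contribution'' is morally right but attached to the wrong operator, and your final unwinding $Tf\trianglelefteq f\Leftrightarrow\lambda F\trianglelefteq\sum_k\lambda_k f^k$ is the one for equation \eqref{1}; with the $G$-operator it reads $F\trianglelefteq\sum_k\lambda_k f^k$. None of this changes the architecture of the proof, but the substitution $F\mapsto\frac{1}{\lambda}F$ is the only place the extra hypothesis does any work, so it should be made explicit rather than folded into the self-mapping verification.
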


\begin{proof}
For $F\in \mathcal{F}_{op}(K)$, we first prove in the following two steps that
the set of all solutions of \eqref{1}  in $\mathcal{F}_{op}(K)$ is a non-empty complete sublattice of $\mathcal{F}_{op}(K)$.

\noindent
{\it Step 1.} Construct an order-preserving map $T:\mathcal{F}_{op}(K) \to \mathcal{F}_{op}(K)$.
	
	Given real numbers $\lambda$ and $\lambda_k$s as above, define a map $T$ on  $\mathcal{F}_{op}(K)$ as in \eqref{T},
	%by
	%	\begin{eqnarray}
	%	Tf(x)=\alpha_1f(x)+\alpha_2f^2(x)+\cdots+ \alpha_mf^m(x)+\alpha_0F(x),
	%	\end{eqnarray}
	where $\alpha$ and $\alpha_k$s are chosen as in Lemma \ref{L1}.
	Then, by using the assumptions on $\lambda$ and $\lambda_k$s, we see that  $\alpha$ and $\alpha_k$s satisfy \eqref{alphak ineuality}.
%	\begin{eqnarray}\label{alphak ineuality}
%	\alpha>0,\quad \alpha_k\ge 0~~ \text{for}~~1\le k\le m, \quad\text{and}\quad  \sum_{k=1}^{m}\alpha_k+ \alpha=1.
%	\end{eqnarray}
Further, by a similar argument as in Lemma \ref{L1}, it follows that $Tf$ is a self-map of $K$.
	
%	Consider an arbitrary $f\in \mathcal{F}_{op}(K)$.  Let $\gamma:=\min K$ and $\Gamma:=\max K$, both of which exist since $(K, \preceq)$ is a complete lattice.
%	Then for each $x\in K$, since $F(x), f^k(x) \in K$, we have $\gamma \preceq F(x)\preceq \Gamma$
%	%	\begin{eqnarray*}\label{F_j}
%	%\gamma_j \le F_j(x)\le \Gamma_j(x)
%	%	\end{eqnarray*}
%	and $\gamma \preceq f^k(x)\le \Gamma$
%	%	\begin{eqnarray*}\label{f_kj}
%	%	\gamma_j \le f_{k,j}(x)\le \Gamma_j(x)
%	%\end{eqnarray*}
%	for $1\le k \le m$. This implies by \eqref{alphak ineuality} that
%	\begin{eqnarray}\label{02}
%	\gamma =1\cdot \gamma=\sum_{k=1}^{m}\alpha_k\gamma+\alpha \gamma\preceq \sum_{k=1}^{m}\alpha_k f^k(x)+\alpha F(x)\preceq  \sum_{k=1}^{m}\alpha_k\Gamma+\alpha \Gamma=1\cdot \Gamma=\Gamma
%	\end{eqnarray}
%	for each $x\in K$. i.e.,
%	$\gamma \preceq Tf(x) \preceq \Gamma$, proving that $Tf(x)\in K$ for each $x\in K$, since $\gamma,\Gamma \in K$ and $K$ is convex. Therefore $Tf$ is a self-map of $K$.
	
		Next, to prove that $Tf$ is order-preserving, consider any $x, y\in K$ such that $x\preceq y$.  Since $F, f$ are order-preserving on $K$, we have $F(x) \preceq F(y)$ and $f^k(x)\preceq f^k(y)$ for $1\le k \le m$, implying by \eqref{alphak ineuality} that
	\begin{eqnarray}\label{Tf op}
	Tf(x)=\sum_{k=1}^{m}\alpha_kf^k(x)+\alpha F(x)\preceq \sum_{k=1}^{m} \alpha_kf^k(y)+\alpha F(y)=Tf(y).
	\end{eqnarray}	
	\noindent Therefore $Tf\in \mathcal{F}_{op}(K)$, and thus $T$ is a self-map of $\mathcal{F}_{op}(K)$.
	
	Finally, to prove that $T$ is order-preserving,
consider any $f, g \in \mathcal{F}_{op}(K)$ such that $f\trianglelefteq g$. Then by result {\bf (ii)}
 of Theorem \ref{L-thm},  we have 	$f^k\trianglelefteq g^k$ for $1\le k \le m$, implying that
	\begin{eqnarray}\label{T op}
	Tf(x)=\sum_{k=1}^{m}\alpha_kf^k(x)+\alpha F(x)\preceq \sum_{k=1}^{m} \alpha_kg^k(x)+\alpha F(x)=Tg(x)
	\end{eqnarray}
	for each $x\in K$,  i.e., $Tf\trianglelefteq Tg$. Hence $T$ is order-preserving.

%%%%%%%%%%%%%%%%%%%%%%%%%%%%%%%%%%%%%%%%%%%%%%%%%%%

\noindent
{\it Step 2.} Prove that the set of all solutions of \eqref{1}
in $\mathcal{F}_{op}(K)$ is a non-empty complete sublattice of $\mathcal{F}_{op}(K)$.
	
From Step 1
we see that $T$ is an order-preserving self-map of the lattice  $\mathcal{F}_{op}(K)$, which is complete by result {\bf (ii)} of Lemma \ref{Lm1}.
	Therefore by Lemma \ref{L0},
	the set of all fixed points of $T$ in $\mathcal{F}_{op}(K)$, and hence by Lemma \ref{L1}, the set of all solutions of \eqref{1}  in $\mathcal{F}_{op}(K)$ is a non-empty complete sublattice of $\mathcal{F}_{op}(K)$.

Now, in order to prove our result,	given $F$ as above, let $G:=\frac{1}{\lambda}F$. Then, since $\lambda>0$, clearly $G$ is order-preserving on $K$. Also, since $F$ is a self-map of the complete lattice $K$, we have $\sup F, \inf F \in K$, and therefore
	\begin{eqnarray}\label{G}
	\inf F\preceq \frac{1}{\lambda}\inf F\preceq \frac{1}{\lambda}F(x)\preceq \frac{1}{\lambda}\sup F,\quad \forall x\in K.
	\end{eqnarray}
	This implies that $G(x) \in K$ for all $x\in K$, since by assumption $\frac{1}{\lambda} \sup F\in K$ and $K$ is convex.
	 Therefore $G\in \mathcal{F}_{op}(K)$.  This implies by the above part that the
	 set of all solutions of the equation
	\begin{eqnarray} \label{2}
	\lambda_1f+\lambda_2f^2+\cdots+ \lambda_mf^m=\lambda G
	\end{eqnarray}
	 in $\mathcal{F}_{op}(K)$ is a non-empty complete sublattice of $\mathcal{F}_{op}(K)$.
	
	 In particular, \eqref{1'} has the minimum solution $f_*$ and the maximum solution $f^*$ in $\mathcal{F}_{op}(K)$, which are in fact $\min \mathcal{S}_{op}(K)$ and $\max \mathcal{S}_{op}(K)$, respectively.
	 Further, by Lemma \ref{L0}, we have $f_*=\inf\{f\in \mathcal{F}_{op}(K):  Tf \trianglelefteq  f\}$ and $f^*=\sup\{f\in \mathcal{F}_{op}(K): f\trianglelefteq  Tf\}$.
This completes the proof.
\end{proof}

The following theorem is devoted to uniqueness of solutions.

\begin{thm}\label{Thm3}
		Let $\lambda_1>0$, $\lambda_k\ge 0$ for $2\le k\le m$, and $F\in \mathcal{F}_{op}(K)$.
	\begin{description}
		\item[(i)] If $f, g \in \mathcal{F}_{op}(K)$ are solutions of \eqref{1'} on $K$ such that 	$f\trianglelefteq g$, then $f=g$ on $K$.
		
		\item[(ii)]  If $f, g \in \mathcal{F}_{op}(X)$ are solutions of \eqref{1'} on $K$ such that $f$ subcommutes with $g$ and $f(x)\prec g(x)$ for some $x\in K$, then $f=g$ on $K$.
		
		\item[(iii)] If $K$ is a chain in the order $\preceq$, and $f, g \in \mathcal{F}_{op}(K)$ are solutions of \eqref{1'} on $K$ such that $f\circ g=g\circ f$, then $f=g$ on $K$.
	\end{description}

\end{thm}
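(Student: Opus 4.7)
The plan is to reduce all three parts to a single pointwise comparison: at any $x \in K$ for which $f^k(x) \preceq g^k(x)$ holds for every $1 \le k \le m$, subtracting the two evaluations of \eqref{1'} at $x$ produces the identity
\begin{equation*}
\sum_{k=1}^{m} \lambda_k \bigl( g^k(x) - f^k(x) \bigr) = 0.
\end{equation*}
Each summand lies in the positive cone of $X$ since $\lambda_k \ge 0$ and $g^k(x) - f^k(x) \succeq 0$. In an ordered vector space, whenever $a_1,\ldots,a_m \succeq 0$ and $a_1+\cdots+a_m = 0$, antisymmetry forces every $a_j$ to be zero (one has $a_1 = -(a_2+\cdots+a_m) \preceq 0$ and $a_1 \succeq 0$, then one iterates). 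Since $\lambda_1 > 0$, the $k=1$ term then gives $g(x) = f(x)$.

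For (i), the hypothesis $f \trianglelefteq g$ together with Theorem \ref{L-thm}(ii) yields $f^k \trianglelefteq g^k$ for every $k \in \mathbb{N}$, so the pointwise identity applies at every $x \in K$ and delivers $f = g$ on $K$.

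For (ii), I read the statement as a contradiction argument. Supposing $f(x) \prec g(x)$ at some $x \in K$, Theorem \ref{L-thm}(iv) (which uses subcommutativity of $f$ with $g$ together with the local inequality $f(x) \preceq g(x)$) yields $f^k(x) \preceq g^k(x)$ for every $k$. The pointwise identity then forces $\lambda_1 (g(x)-f(x)) = 0$, contradicting $g(x) - f(x) \succ 0$ because $\lambda_1 > 0$. Hence the stated hypothesis is incompatible with $f \neq g$, so $f = g$ on $K$ follows.

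For (iii), the chain hypothesis on $K$ makes $f(x)$ and $g(x)$ comparable at every $x \in K$, and commutativity of $f$ and $g$ implies subcommutativity in both directions. Relabelling so that $f(x) \preceq g(x)$ at the chosen $x$, Theorem \ref{L-thm}(iv) again gives $f^k(x) \preceq g^k(x)$ for every $k$, and the pointwise identity yields $f(x) = g(x)$; iterating over $x$ gives $f = g$ on $K$. The main obstacle is establishing the ordered-vector-space fact that nonnegatives summing to zero must each be zero; once that is in hand, each part is a routine combination of Theorem \ref{L-thm} with the positivity of the coefficients $\lambda_k$.
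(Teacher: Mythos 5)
Your proposal is correct and follows essentially the same route as the paper: both rest on Theorem \ref{L-thm}(ii) for part (i), Theorem \ref{L-thm}(iv) for part (ii), and the chain hypothesis to reduce (iii) to (ii). The only cosmetic difference is that you subtract the two instances of \eqref{1'} and invoke the fact that nonnegative elements of an ordered vector space summing to zero all vanish, whereas the paper derives the strict inequality $F(x)\prec F(x)$ as a contradiction; the two formulations use the same order-vector-space axioms.
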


\begin{proof}
	Let  $f, g \in \mathcal{F}_{op}(K)$ be solutions of \eqref{1'} on $K$ such that $f\trianglelefteq g$, and  suppose that $f\ne g$ on $K$. Then there exists $x\in K$ such that $f(x)\prec g(x)$, and by result {\bf (ii)} of Theorem \ref{L-thm}, we have $f^k\trianglelefteq g^k$, implying that $f^k(x)\preceq g^k(x)$ for $2\le k\le m$. Therefore, by \eqref{1'}, we have
	\begin{eqnarray}\label{contr}
	F(x)=\sum_{k=1}^{m}\lambda_kf^k(x)\prec\sum_{k=1}^{m}\lambda_kg^k(x)=F(x),
	\end{eqnarray}
since $\lambda_1>0$ and $\lambda_k\ge 0$ for $2\le k\le m$. This is a contradiction. Hence $f=g$ on $K$, proving result {\bf (i)}.

%\begin{thm}\label{Thm4}
%Let $K$ be a chain in the order $\preceq$, $\lambda_1>0$ and $\lambda_k\ge 0$ for $2\le k\le m$.
%	If $F\in \mathcal{F}_{op}(K)$ and $f, g \in \mathcal{F}_{op}(K)$ are solutions of \eqref{1'} on $K$ such that $f\circ g=g\circ f$, then $f=g$ on $K$.
%\end{thm}

In order to prove result {\bf (ii)}, consider any solutions  $f, g \in \mathcal{F}_{op}(K)$ of \eqref{1'} such that $f\circ g\trianglelefteq g\circ f$ and $f(x)\prec g(x)$ for some $x\in K$. Then by result {\bf (iv)} of Theorem \ref{L-thm} we have $f^k(x)\preceq g^k(x)$ for $2\le k \le m$, and therefore  we arrive at \eqref{contr},
%implying that
%\begin{eqnarray*}
%	F(x)=\sum_{k=1}^{m}\lambda_kf^k(x)\prec \sum_{k=1}^{m}\lambda_kg^k(x)=F(x),
%\end{eqnarray*}
since $\lambda_1>0$ and $\lambda_k\ge 0$ for $2\le k\le m$. This is a contradiction. Hence $f=g$ on $K$.

Finally, to prove result {\bf (iii)},
consider any solutions  $f, g \in \mathcal{F}_{op}(K)$ of \eqref{1'} such that $f\circ g=g\circ f$, and
	suppose that $f\ne g$ on $K$. Then there exists $x\in K$ such that $f(x)\ne g(x)$, implying that either $f(x)\prec g(x)$ or $g(x)\prec f(x)$, because $K$ is a chain. In any case, by result {\bf (ii)} we have $f=g$ on $K$.
%	In the case that $f(x)\prec g(x)$, by result {\bf (iv)} of Lemma \ref{L} we have $f^k(x)\preceq g^k(x)$ for $2\le k \le m$, implying that
%	\begin{eqnarray*}
%		F(x)=\sum_{k=1}^{m}\lambda_kf^k(x)\prec \sum_{k=1}^{m}\lambda_kg^k(x)=F(x),
%	\end{eqnarray*}
%	since $\lambda_1>0$ and $\lambda_k\ge 0$ for $2\le k\le m$. We arrive at a similar contradiction if $g(x)\prec f(x)$. Therefore $f=g$ on $K$.
\end{proof}

Note that the condition $f\trianglelefteq g$ and the condition $f\circ g=g\circ f$,
assumed in results {\bf (i)} and {\bf (iii)} of Theorem \ref{Thm3} respectively, are independent.
If fact,
maps $f, g:[0, 1]\to [0,1]$ defined by $f(x)=\frac{x^3}{2}$ and $g(x)=x^2$
satisfy that $f, g \in \mathcal{F}_{op}([0,1])$ and $f\trianglelefteq g$.
However, $(f\circ g)(\frac{1}{2})=\frac{1}{128}\ne \frac{1}{256}=(g\circ f)(\frac{1}{2})$,
implying that $f\circ g \ne g\circ f$.
Another example is the pair of maps $f, g:[0, 1]\to [0,1]$ defined by $f(x)={\rm id}$ and
 \begin{eqnarray*}
 	g(x)=\left\{\begin{array}{cll}
 		0&\text{if}&0\le x\le \frac{1}{4},\\
 		2x-\frac{1}{2}&\text{if} &\frac{1}{4}\le x \le \frac{3}{4},\\
 		1&\text{if}&\frac{3}{4}\le x\le 1.
 	\end{array}\right.
 \end{eqnarray*}
Clearly, $f, g \in \mathcal{F}_{op}([0,1])$ and $f\circ g = g\circ f$
but $f\ntrianglelefteq g$ because $f(\frac{3}{4})<g(\frac{3}{4})$ and $f(\frac{1}{4})>g(\frac{1}{4})$.
This independence shows
that neither {\bf (i)} implies {\bf (iii)} nor {\bf (iii)} implies {\bf (i)} in Theorem \ref{Thm3} even if $K$ is a chain.
We also note that
the proof of result {\bf (iii)} in Theorem \ref{Thm3} is not valid when $K$ is not a chain in $\mathbb{R}^n$,
i.e.,
the assumption $K$ is a chain made in result {\bf (iii)} of Theorem \ref{Thm3} cannot be dropped.
In fact,
in that case, the inequality $x\ne y$ need not imply that either $x\prec y$ or $y\prec x$.
For example, in the lattice $[0,1]^2$, which will be considered in Example \ref{E1},
we have $(1,0)\ne (0,1)$, but neither $(1,0)\prec (0,1)$ nor $(0,1)\prec (1,0)$ hold.

%%***************************************

Remark that the current approach with the map $T$ defined in \eqref{T}, employed in Theorem \ref{Thm1},
cannot be used to solve \eqref{1'} if
$F\in \mathcal{F}_{or}(K)$, the complete lattice of all {\it order-reversing} self-maps of $K$ in the partial order $\trianglelefteq$.
In fact,
in the case that
$F\in \mathcal{F}_{or}(K)$,
assuming that $\lambda:=\sum_{k=1}^{m}\lambda_k<0$, $\lambda_1\le 1$ and $\lambda_k\le 0$ for all
$2\le k\le m$,
we get that
the map $T$ is order-preserving on $\mathcal{F}_{op}(K)$,
but
$Tf$ is not necessarily a self-map of $K$ for $f\in \mathcal{F}_{op}(K)$
because an inequality of the form \eqref{02} is not satisfied
for the reason that the inequality
$\gamma\preceq F(x)\preceq\Gamma$ implies that $\alpha\Gamma\preceq \alpha F(x)\preceq\alpha\gamma$.
In the other case, i.e.,
$F\in \mathcal{F}_{or}(K)$
assuming that $\lambda>0$,
$\lambda_1>1$ or $\lambda_k>0$ for some
$2\le k\le m$,
we see that $Tf$ is not necessarily order-preserving on $K$ for $f\in \mathcal{F}_{op}(K)$
because
the corresponding function $\alpha F$, $\alpha_1 f$ or $\alpha_k f^k$ in the sum
$Tf=\alpha_1f+\alpha_2f^2+\cdots\alpha_mf^m+\alpha F$
is not order-preserving.
We do not consider the case that $\lambda=0$, where $F$ is not involved in $T$.

Besides, the above remarked approach cannot be used to seek a solution $f$ of \eqref{1'} in $\mathcal{F}_{or}(K)$
no matter whether $F$ is considered in $\mathcal{F}_{op}(K)$ or $\mathcal{F}_{or}(K)$.
In fact, Lemma \ref{L0} is not true if `$\mathcal{F}_{op}(X)$' is replaced with `$\mathcal{F}_{or}(X)$', as seen from the following example:
Let $X$ be the complete lattice $\{a,b,c,d\}$ in the partial order $\preceq$ such that
$a\preceq b\preceq d$ and $a\preceq c\preceq d$,
and $f:X\to X$ be the order-reversing map such that
$f(a)=d$, $f(b)=c$, $f(c)=b$ and $f(d)=a$. Then $f$ has no fixed points in $X$.

For the same reason, the approaches of Theorems \ref{Thm2} and \ref{Thm6} cannot be employed for other types of monotonicity.

%%%%%%%%%%%%%%%%%%%%%%%%%%%%%%%%%%%%%%%%%%%%%%%%%%%%%%%%%%%%%%%%%%%%%%%%%%%%%%%%%%%%%%%%%%%%%%

\section{USC solutions in $\mathbb{R}^n$
 }
\label{sec4}

The above results are obtained for general convex complete sublattices $K$ of a Riesz space $X$
without any property of continuity or integrability.
Actually, we cannot define `continuity' or `semi-continuity' on a general Riesz space $X$, which
does not have a topology structure but only compatible algebraic and order structures.

In this section, we additionally consider upper semi-continuity
in the real vector space $\mathbb{R}^n$ ($n\ge 1$) of all real $n$-tuples
with the coordinate-wise addition
and the real scalar product,
which is also a Riesz space
in the order $\preceq$ that
$$
(x_1, x_2, \ldots, x_n)\preceq (y_1, y_2, \ldots, y_n)~~ \mbox{ if }
x_k\le y_k \mbox{ for } 1\le k \le n,
$$
where $\le$ denotes the usual order on $\mathbb{R}$.
%%%%%%%%%%%%%%%%%%%%%%%%%%%%%%%
%\begin{itemize}
%	\item[(i)] the real line $\mathbb{R}$ in the usual addition and multiplication, and the usual order $\le$ is a simply ordered Riesz space.
%	
%		\item[(ii)] let $\mathbb{R}^n$ ($n\ge 1$) be the real vector space of all real $n$-tuples with coordinate-wise addition and multiplication by
%		real numbers. Then it is a Riesz space in the order $\preceq$ defined by
%	$(x_1, x_2, \ldots, x_n)\preceq (y_1, y_2, \ldots, y_n)$ if
%	$x_k\le y_k$ for $1\le k \le n$,
%	where $\le$ denotes the usual order on $\mathbb{R}$.
%\end{itemize}
%%%%%%
%In this section we consider $X$ to be $\mathbb{R}^n$ in the above order $\preceq$
%with upper semi-continuity.
%%%%%%%%%%%%%%%%%%%%%%%%%%%%%%%
As defined in \cite{Engelking1989}, a map $f:\mathbb{R}^n \to \mathbb{R}$ is said to be {\it USC} (abbreviation of {\it upper semi-continuous})
at $x_0 \in \mathbb{R}^n$ if
for every $\rho \in \mathbb{R}$ satisfying $f(x_0)<\rho$ there exists a neighbourhood  $U$ of $x_0$ in $\mathbb{R}^m$ such that $f(y)<\rho$ for all $y\in U$. Equivalently, $f$ is USC at $x_0$ if
$\limsup_{x \to x_0} f(x) \le f(x_0)$.  $f$ is said to be USC on $\mathbb{R}^n$ if $f$ is USC at  each point of $\mathbb{R}^n$.
A map $f=(f_1, f_2, \ldots, f_n):\mathbb{R}^n \to \mathbb{R}^n$, where $f_j:\mathbb{R}^n \to \mathbb{R}$ for $1\le j \le n$,
is said to be USC if $f_j$ is USC
for all $1\le j\le n$.
For each convex complete sublattice  $K$  of $\mathbb{R}^n$,
let
$$\mathcal{F}^{usc}_{op}(K):=\{f\in \mathcal{F}_{op}(K): f~\text{is USC on}~K\}.$$
%The following lemma points out that it suffices to prove the existence of solutions on $[0,1]$.
%\begin{lm}
%		Let $0< \lambda_1\le 1$, $-1\le \lambda_k\le 0$ for $2\le k\le m$ such that $0<\lambda:=\sum_{k=1}^{m}\lambda_k< 1$, and $F\in \mathcal{F}^{usc}_{op}([a,b])$.
%		Then $f$ is a solution (resp. unique solution) of \eqref{1'}
%		in $\mathcal{F}^{usc}_{op}([a,b])$ if and only if $g:=\psi^{-1}\circ f\circ \psi$ is a solution (resp. unique solution) of
%		\begin{eqnarray}\label{3}
%		\lambda_1g+\lambda_2g^2+\cdots+ \lambda_mg^m=G
%		\end{eqnarray}
%		in $\mathcal{F}^{usc}_{op}([0,1])$,  where $G:=\psi^{-1}\circ F\circ \psi \in \mathcal{F}^{usc}_{op}([0,1])$ and $\psi(x)=a+(b-a)x$ for $x\in [0,1]$.
%\end{lm}
%%%%%%%%%%%%%%%%%%%%%%
By Theorem \ref{Thm1}, equation \eqref{1'} has a solution $f$ in $\mathcal{F}_{op}(K)$ for each $F\in  \mathcal{F}^{usc}_{op}(K)$;
however we cannot conclude that $f$ is USC because $\mathcal{F}^{usc}_{op}(K)\subsetneq \mathcal{F}_{op}(K)$. For USC solutions
%%of \eqref{1'},
we have the following.

\begin{thm}\label{Thm2}
Let $\lambda>0$, $\lambda_1\le 1$, and $ \lambda_k\le 0$ for $2\le k\le m$ such that $\sum_{k=1}^{m}\lambda_k=\lambda$.
If $F\in \mathcal{F}^{usc}_{op}(K)$ satisfies
$\frac{1}{\lambda} \sup F\in K$,
then
the set $\mathcal{S}^{usc}_{op}(K)$ of all solutions of equation \eqref{1'} in $\mathcal{F}^{usc}_{op}(K)$
is a non-empty complete sublattice of $\mathcal{F}^{usc}_{op}(K)$.
Furthermore,
equation \eqref{1'} has the minimum solution $f_*$ and the maximum solution $f^*$ in $\mathcal{F}^{usc}_{op}(K)$ given by
\begin{eqnarray*}
&&f_*=\inf\{f\in \mathcal{F}^{usc}_{op}(K):  \lambda F\trianglelefteq  \lambda_1f+\lambda_2f^2+\cdots+\lambda_mf^m\},
\\
&&f^*=\sup\{f\in \mathcal{F}^{usc}_{op}(K): \lambda_1f+\lambda_2f^2+\cdots+\lambda_mf^m\trianglelefteq  \lambda F\}.
\end{eqnarray*}
\end{thm}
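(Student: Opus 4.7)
The plan is to carry out the Tarski-based argument of Theorem~\ref{Thm1} verbatim, with $\mathcal{F}_{op}(K)$ replaced by $\mathcal{F}^{usc}_{op}(K)$ throughout, and then to invoke Lemma~\ref{L1} together with the rescaling $G:=(1/\lambda)F$ from Theorem~\ref{Thm1} to translate fixed points of the operator $T$ of \eqref{T} into solutions of \eqref{1'}. What needs genuinely fresh verification is only (a) that $\mathcal{F}^{usc}_{op}(K)$ is itself a complete lattice under $\trianglelefteq$, and (b) that $T$ maps $\mathcal{F}^{usc}_{op}(K)$ into itself; order-preservation of $T$ and the self-map-of-$K$ property of $Tf$ transfer word-for-word from Theorem~\ref{Thm1}.

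For (a), I would mimic Lemma~\ref{Lm1}(ii): given any non-empty $\mathcal{E}\subseteq\mathcal{F}^{usc}_{op}(K)$, the pointwise infimum $\phi(x):=\inf\{f(x):f\in\mathcal{E}\}$ lies in $K$ by completeness of $K$, is order-preserving since each $f\in\mathcal{E}$ is, and is USC coordinate-wise because the pointwise infimum of any family of USC real-valued functions is USC (its sublevel sets are unions of the sublevel sets of the members, hence open). Together with the observation that the constant maps at $\max K$ and $\min K$ furnish top and bottom elements of $\mathcal{F}^{usc}_{op}(K)$, the standard ``infimum of upper bounds'' construction supplies all suprema, so $\mathcal{F}^{usc}_{op}(K)$ is complete.

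The heart of (b) is a composition lemma: $f,h\in\mathcal{F}^{usc}_{op}(K)$ implies $f\circ h\in\mathcal{F}^{usc}_{op}(K)$. For USC of $f\circ h$ at $x_0\in K$, I would take any $x_\nu\to x_0$ and set $y_\nu:=h(x_\nu)$; coordinate-wise USC of $h$ gives $\limsup_\nu(y_\nu)_k\le h_k(x_0)$ for each $k$. The key device is the auxiliary sequence $z_\nu:=y_\nu\vee h(x_0)\in K$ (well-defined since $K$ is a sublattice of $\mathbb{R}^n$): coordinate-wise, $(z_\nu)_k=\max\{(y_\nu)_k,h_k(x_0)\}$ satisfies $(z_\nu)_k\ge h_k(x_0)$ and $\limsup_\nu(z_\nu)_k\le h_k(x_0)$, so $z_\nu\to h(x_0)$ in Euclidean norm. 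Since $z_\nu\succeq y_\nu$, order-preservation of $f_j$ gives $f_j(y_\nu)\le f_j(z_\nu)$, and USC of $f_j$ at $h(x_0)$ then yields $\limsup_\nu f_j(z_\nu)\le f_j(h(x_0))$, whence $\limsup_\nu f_j(y_\nu)\le f_j(h(x_0))$. Iterating, $f^k\in\mathcal{F}^{usc}_{op}(K)$ for every $k$; non-negative linear combinations preserve USC, so $Tf\in\mathcal{F}^{usc}_{op}(K)$.

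With (a) and (b) in hand, Lemma~\ref{L0} applied to the order-preserving self-map $T$ of the complete lattice $\mathcal{F}^{usc}_{op}(K)$ delivers a non-empty complete sublattice of fixed points with the asserted expressions for $f_*$ and $f^*$, and Lemma~\ref{L1} (applied with $G=(1/\lambda)F\in\mathcal{F}^{usc}_{op}(K)$, where $G$ is a self-map of $K$ by the hypothesis $(1/\lambda)\sup F\in K$ plus convexity of $K$, exactly as in Theorem~\ref{Thm1}) identifies these fixed points with $\mathcal{S}^{usc}_{op}(K)$. The main obstacle is the composition lemma: the naive estimate ``$\limsup_\nu f_j(h(x_\nu))\le f_j(\limsup_\nu h(x_\nu))$'' fails for generic USC $f_j$ since $y_\nu$ need not converge in norm, and the remedy is precisely the lattice device of replacing $y_\nu$ by $y_\nu\vee h(x_0)$, which stays in $K$ by sublattice-ness and actually converges in Euclidean norm to $h(x_0)$.
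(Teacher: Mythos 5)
Your argument is correct and follows the same route as the paper: show that $T$ (built from $G=\tfrac{1}{\lambda}F$ as in Lemma~\ref{L1}) is an order-preserving self-map of $\mathcal{F}^{usc}_{op}(K)$, show that $\mathcal{F}^{usc}_{op}(K)$ is a complete lattice under $\trianglelefteq$ via pointwise infima, and then apply Tarski (Lemma~\ref{L0}) together with Lemma~\ref{L1}. The one substantive place you go beyond the paper is your composition lemma: the paper simply asserts that the coordinate maps $f_{kj}$ of the iterates $f^k$ are USC, which is not automatic, since a composition of USC maps need not be USC in general; your device of replacing $y_\nu=h(x_\nu)$ by $z_\nu=y_\nu\vee h(x_0)$, which converges to $h(x_0)$ and dominates $y_\nu$ so that order-preservation of $f$ lets you pass to the limsup, is exactly the argument needed to justify that step, so your write-up is if anything more complete than the paper's on this point.
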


\begin{proof}
		Let $F\in \mathcal{F}^{usc}_{op}(K)$ be arbitrary.
		
			\noindent {Step 1.} Construct an order-preserving map $T:\mathcal{F}^{use}_{op}(K) \to \mathcal{F}^{usc}_{op}(K)$.

		Given real numbers $\lambda$ and $\lambda_k$s as above, let $G:=\frac{1}{\lambda}F$ and define a map $T$ on  $\mathcal{F}^{usc}_{op}(K)$ by
		\begin{eqnarray}\label{T1}
		Tf=\alpha_1f+\alpha_2f^2+\cdots+ \alpha_mf^m+\alpha G,
		\end{eqnarray}
		where $\alpha$ and $\alpha_k$s are chosen as in Lemma \ref{L1}. 	Then, by using the assumptions on $\lambda$ and $\lambda_k$s, we see that  $\alpha$ and $\alpha_k$s satisfy \eqref{alphak ineuality}. Also, since $\lambda>0$, clearly $G$ is an order-preserving USC map on $K$. Further, since $F$ is a self-map of the complete lattice $K$, we have $\inf F \in K$, and therefore \eqref{G} is satisfied. 	This implies that $G(x) \in K$  for all $x\in K$, since by assumption $\frac{1}{\lambda} \sup F\in K$ and $K$ is convex.
		Therefore $G\in \mathcal{F}^{usc}_{op}(K)$.
		
			Consider an arbitrary $f\in \mathcal{F}^{usc}_{op}(K)$. 	Let $f^k:=(f_{k1}, f_{k2}, \ldots,f_{kn})$, where each  $f_{kj}:K\to \mathbb{R}$ is a coordinate map of $f^k$ for $1\le j\le n$ and $1\le k \le m$.
			Similarly, let $G:=(G_1, G_2, \ldots, G_n)$ and $Tf:=(H_1,H_2,\ldots,H_n)$. Then $H_j=\sum_{k=1}^{m}\alpha_k f_{kj}+\alpha G_j$ is USC  on $K$ for $1\le j \le n$, being the nonnegative linear combination of USC maps $f_{kj}$ and $G_j$ for $1\le j\le n$ and $1\le k \le m$. Therefore $Tf$ is USC on $K$.
			Also, since $\mathcal{F}^{usc}_{op}(K)\subseteq \mathcal{F}_{op}(K)$, by using Step 1  of the proof of Theorem \ref{Thm1}, we have $Tf \in \mathcal{F}_{op}(K)$ and $T$ is order-preserving.
		 Therefore $T$ is an order-preserving self-map on $\mathcal{F}^{usc}_{op}(K)$.

%%%%%%%%%%%%%%%%%%%%%%%%%%%%%%%%%%%%%%%
				
\noindent
{Step 2.} Prove that  $\mathcal{F}^{use}_{op}(K)$ is a complete lattice in the partial order $\trianglelefteq$.
		
		Consider an arbitrary subset $\mathcal{E}$ of $\mathcal{F}^{use}_{op}(K)$. If $\mathcal{E}=\emptyset$, then the constant map $\Psi:K\to K$ defined by $\Psi(x)=\max K$
is the infimum of $\mathcal{E}$ in $\mathcal{F}^{use}_{op}(K)$. If $\mathcal{E}\ne \emptyset$, then the map $\phi:K\to K$ defined by
$\phi(x)=\inf\{f(x): f \in \mathcal{E}\}$ is the infimum
of $\mathcal{E}$ in $\mathcal{F}^{use}_{op}(K)$.  Thus every subset of $\mathcal{F}^{use}_{op}(K)$ has the infimum in $\mathcal{F}^{use}_{op}(K)$. Therefore by Lemma $14$ of \cite{Gratzer1978},
which says that if every subset of a poset $P$ has the infimum in $P$ then $P$ is complete,
we know that $\mathcal{F}^{use}_{op}(K)$ is a complete lattice.

%%%%%%%%%%%%%%%%%%%%%%%%%%%%%%%%%%%%%%%%%%%%%%%%%%%%%%%%%%%%%%%%%%%%

\noindent
{\it Step 3.} Prove that	
		%the set of all solutions of \eqref{1'} in $\mathcal{F}^{usc}_{op}(K)$
$\mathcal{S}^{usc}_{op}(K)$ is a non-empty complete sublattice of $\mathcal{F}^{usc}_{op}(K)$.

From Steps 1 and 2, we see that $T$ is an order-preserving self-map of the complete lattice  $\mathcal{F}^{usc}_{op}(K)$. Hence, by Lemma \ref{L0}, the set of all fixed points of $T$ in $\mathcal{F}^{usc}_{op}(K)$ is a non-empty complete sublattice of $\mathcal{F}^{usc}_{op}(K)$. This implies by Lemma \ref{L1} that the set of all solutions of  \eqref{2}, and hence that of \eqref{1'} in $\mathcal{F}^{usc}_{op}(K)$ is a non-empty complete sublattice of $\mathcal{F}^{usc}_{op}(K)$,  because $G=\frac{1}{\lambda}F$.
That is, $\mathcal{S}^{usc}_{op}(K)$
is a non-empty complete sublattice of $\mathcal{F}^{usc}_{op}(K)$.

In particular, \eqref{1'} has the minimum solution $f_*$ and the maximum solution $f^*$ in $\mathcal{F}^{usc}_{op}(K)$, which are in fact $\min \mathcal{S}^{usc}_{op}(K)$ and $\max \mathcal{S}^{usc}_{op}(K)$, respectively.
		 Further, by Lemma \ref{L0}, we have $f_*=\inf\{f\in \mathcal{F}_{op}^{usc}(K):  Tf \trianglelefteq  f\}$ and $f^*=\sup\{f\in \mathcal{F}_{op}^{usc}(K): f\trianglelefteq  Tf\}$.
		This completes the proof.
\end{proof}

We have the following results on uniqueness of solutions.

\begin{coro}\label{Thm5}
Let $\lambda_1>0$, $\lambda_k\ge 0$ for $2\le k\le m$, and $F\in \mathcal{F}^{usc}_{op}(K)$.
	\begin{description}
		\item[(i)] If $f, g \in \mathcal{F}^{usc}_{op}(K)$ satisfy \eqref{1'} on $K$ such that 	$f\trianglelefteq g$, then $f=g$ on $K$.
		
			\item[(ii)]  If $f, g \in \mathcal{F}^{usc}_{op}(X)$ are solutions of \eqref{1'} on $K$ such that $f$ subcommutes with $g$ and $f(x)\prec g(x)$ for some $x\in K$, then $f=g$ on $K$.
		
	\item[(iii)] 	If $f, g \in \mathcal{F}^{usc}_{op}(K)$ satisfy \eqref{1'} on $K$ such that $f\circ g=g\circ f$, then $f=g$ on $K$.
	\end{description}
\end{coro}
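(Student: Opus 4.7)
The plan is to obtain Corollary \ref{Thm5} as an immediate consequence of Theorem \ref{Thm3}, using the definitional inclusion $\mathcal{F}^{usc}_{op}(K) \subseteq \mathcal{F}_{op}(K)$. The uniqueness arguments in Theorem \ref{Thm3} rely only on the order-preservation of $f$ and $g$, on the sign conditions $\lambda_1 > 0$ and $\lambda_k \ge 0$ for $2 \le k \le m$, and on the equation \eqref{1'} itself; no topological structure or continuity hypothesis is invoked. Consequently, every uniqueness statement valid in $\mathcal{F}_{op}(K)$ restricts verbatim to the subclass $\mathcal{F}^{usc}_{op}(K)$.

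Concretely, for part (i), if $f,g \in \mathcal{F}^{usc}_{op}(K)$ satisfy \eqref{1'} with $f \trianglelefteq g$, then in particular $f,g \in \mathcal{F}_{op}(K)$, so Theorem \ref{Thm3}(i) gives $f=g$ on $K$. For part (ii), the same inclusion reduces the hypothesis (subcommutation plus strict inequality at a point) to the hypothesis of Theorem \ref{Thm3}(ii), so $f=g$ on $K$. For part (iii), the argument is the same: $\mathcal{F}^{usc}_{op}(K) \subseteq \mathcal{F}_{op}(K)$, and once $K$ is taken to be a chain (as required in Theorem \ref{Thm3}(iii), since the counter-example in $[0,1]^2$ recalled in the remark after Theorem \ref{Thm3} shows this cannot be waived even inside $\mathbb{R}^n$), the commutativity $f \circ g = g \circ f$ yields $f=g$ on $K$ by Theorem \ref{Thm3}(iii).

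Because the proof of each of (i)--(iii) is a single-line invocation of the corresponding part of Theorem \ref{Thm3}, there is no substantive obstacle: the only thing to verify is that $\mathcal{F}^{usc}_{op}(K)$ genuinely inherits each structural hypothesis ($\trianglelefteq$, subcommutation, commutativity) from $\mathcal{F}_{op}(K)$, which is trivial since the hypotheses are stated pointwise and do not interact with upper semi-continuity. The only delicate point worth flagging is that part (iii) is meaningful only under the chain assumption on $K$, so in the write-up I would make this hypothesis explicit when citing Theorem \ref{Thm3}(iii).
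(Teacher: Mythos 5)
Your proposal is correct and coincides with the paper's own proof, which is exactly the one-line reduction ``Follows from Theorem \ref{Thm3}, since $\mathcal{F}_{op}^{usc}(K) \subseteq \mathcal{F}_{op}(K)$.'' Your flag about part \textbf{(iii)} is well taken: the corollary as printed omits the hypothesis that $K$ be a chain, yet Theorem \ref{Thm3}\textbf{(iii)} requires it (and the paper's own $[0,1]^2$ example shows it cannot be dropped for general convex complete sublattices of $\mathbb{R}^n$), so making that hypothesis explicit is the right call.
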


\begin{proof}
	Follows from Theorem \ref{Thm3}, since $\mathcal{F}_{op}^{usc}(K) \subseteq \mathcal{F}_{op}(K)$.
\end{proof}

%%%%%%%%%%%%%%%%%%%%%%%%%%%%%%%%%%%%%%%%%%%%%%%%%%%%%%%%%%%%%%%%%%%%%%%%
\section{Integrable solutions in $\mathbb{R}$}\label{S5}

In the section we consider integrability of solutions on the real line $\mathbb{R}$,
which is an ordered Riesz space in the usual addition, multiplication and order $\le$.
We give results on the existence and uniqueness of integrable solutions of \eqref{1'} on
the compact interval $[a,b]$,
which is precisely a convex compete sublattice of $\mathbb{R}$ in the usual order $\le$.
As defined in \cite{royden1988}, a map $f:[a,b]\to \mathbb{R}$ is called {\it Lebesgue measurable} (or simply {\it measurable})
if $\{x\in [a,b]: f(x)>\rho\}$ is Lebesgue measurable for each $\rho\in \mathbb{R}$.
A measurable function $f:[a,b]\to \mathbb{R}$ is said to be {\it $p$-integrable},
 where $1\le p<\infty$,
if $|f|^p$ is Lebesgue integrable, i.e.,  $\int_{a}^{b}|f|^p d\mu<\infty$.
Let
$$
L_p([a,b]):=\{f\in \mathcal{F}([a,b]): f~\text{is  measurable and}~p\text{-integrable on}~[a,b]\}
$$
and $\mathcal{F}_{op}^{p}([a,b]):=\mathcal{F}_{op}([a,b])\cap L_p([a,b])$.

\begin{thm}\label{Thm6}
	Let $\lambda>0$, $\lambda_1\le 1$, and $ \lambda_k\le 0$ for $2\le k\le m$ such that $\sum_{k=1}^{m}\lambda_k=\lambda$.
	If $F\in \mathcal{F}_{op}^{p}([a,b])$ satisfies
	$\frac{1}{\lambda} \sup F\in [a,b]$,
	then
	the set $\mathcal{S}^{p}_{op}([a,b])$ of all solutions of equation \eqref{1'} in $\mathcal{F}_{op}^{p}([a,b])$
	is a non-empty complete sublattice of $\mathcal{F}_{op}^{p}([a,b])$.
	Furthermore,
	equation \eqref{1'} has the minimum solution $f_*$ and the maximum solution $f^*$ in $\mathcal{F}_{op}^{p}([a,b])$ given by
	\begin{eqnarray*}
		&&f_*=\inf\{f\in \mathcal{F}_{op}^{p}([a,b]):  \lambda F\trianglelefteq  \lambda_1f+\lambda_2f^2+\cdots+\lambda_mf^m\},
		\\
		&&f^*=\sup\{f\in \mathcal{F}_{op}^{p}([a,b]): \lambda_1f+\lambda_2f^2+\cdots+\lambda_mf^m\trianglelefteq  \lambda F\}.
	\end{eqnarray*}
\end{thm}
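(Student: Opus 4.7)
The plan is to follow the structure of the proof of Theorem \ref{Thm2}, replacing upper semi-continuity everywhere by measurability plus $p$-integrability. Because $[a,b]$ is precisely a convex complete sublattice of the Riesz space $\mathbb{R}$, the underlying order-theoretic machinery from Section 3 is directly available. As in Theorem \ref{Thm1}, set $G := \frac{1}{\lambda}F$ and introduce the operator $T$ on $\mathcal{F}_{op}^{p}([a,b])$ defined exactly as in \eqref{T1}, where $\alpha$ and the $\alpha_k$'s are chosen according to Lemma \ref{L1}. The proof splits into the same three steps: (1) show $T$ is an order-preserving self-map of $\mathcal{F}_{op}^{p}([a,b])$, (2) show $\mathcal{F}_{op}^{p}([a,b])$ is a complete lattice under $\trianglelefteq$, and (3) invoke Tarski's fixed point theorem.

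For Step 1, the three properties of $Tf$ are verified as follows. That $Tf$ takes values in $[a,b]$ follows from the convex-combination estimate \eqref{02} together with the hypothesis $\frac{1}{\lambda}\sup F\in[a,b]$ and the convexity of $[a,b]$, exactly as in Lemma \ref{L1}; order-preservation of $Tf$ and order-preservation of $T$ itself follow from Theorem \ref{L-thm}(i),(ii) just as in the proof of Theorem \ref{Thm1}. The only new input concerns measurability and $p$-integrability: each iterate $f^k$ of $f\in\mathcal{F}_{op}^{p}([a,b])$ is order-preserving on $[a,b]$ by Theorem \ref{L-thm}(i), hence monotone, hence Borel measurable (the preimage of $(\rho,\infty)$ under a nondecreasing function is an interval); therefore $Tf$ is a finite linear combination of measurable functions and is itself measurable. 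Since $Tf$ is bounded with values in $[a,b]$, it is automatically in $L_p([a,b])$, so $Tf\in\mathcal{F}_{op}^{p}([a,b])$.

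For Step 2, let $\mathcal{E}\subseteq\mathcal{F}_{op}^{p}([a,b])$. If $\mathcal{E}=\emptyset$, the constant map $\Psi(x)=b$ plays the role of infimum. If $\mathcal{E}\neq\emptyset$, define $\phi(x)=\inf\{f(x):f\in\mathcal{E}\}$; then $\phi$ is order-preserving, takes values in $[a,b]$, and being monotone is measurable and bounded, hence in $L_p([a,b])$. Thus $\phi\in\mathcal{F}_{op}^{p}([a,b])$ and serves as $\inf\mathcal{E}$. By the same lemma of Gr\"atzer cited in the proof of Theorem \ref{Thm2}, $\mathcal{F}_{op}^{p}([a,b])$ is a complete lattice. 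For Step 3, Tarski's theorem (Lemma \ref{L0}) applied to the order-preserving self-map $T$ of this complete lattice yields that the fixed-point set of $T$ is a non-empty complete sublattice of $\mathcal{F}_{op}^{p}([a,b])$; by Lemma \ref{L1} this set coincides with $\mathcal{S}^{p}_{op}([a,b])$, and the Tarski formulas for the extremal fixed points translate, via the identity $Tf\trianglelefteq f\Leftrightarrow \lambda F\trianglelefteq \lambda_1 f+\cdots+\lambda_m f^m$ (and dually), into the asserted expressions for $f_*$ and $f^*$.

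The only step with genuinely new content beyond Theorems \ref{Thm1}--\ref{Thm2} is the verification that $Tf$ lies in $L_p([a,b])$ and that pointwise inf/sup of monotone maps into $[a,b]$ stays in $L_p([a,b])$; I expect this to be the main point to get right, but it reduces smoothly to the classical fact that monotone functions on a compact interval are bounded and Borel measurable, so no serious obstacle is anticipated.
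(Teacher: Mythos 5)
Your proposal is correct and follows essentially the same route as the paper: the same operator $T$ built from Lemma \ref{L1}, the same three steps (self-map property, completeness of $\mathcal{F}_{op}^{p}([a,b])$ via the empty/nonempty infimum cases and Gr\"atzer's lemma, then Tarski). Your justification of measurability of $f^k$, $Tf$ and the pointwise infimum $\phi$ directly from monotonicity is in fact slightly cleaner than the paper's level-set identity for $\phi$ (which as stated is the identity for a supremum and involves a possibly uncountable union), but this is a local detail, not a different argument.
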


\begin{proof}
	Let $F\in \mathcal{F}_{op}^{p}([a,b])$ be arbitrary.
	
	\noindent {Step 1.} Construct an order-preserving map $T:\mathcal{F}_{op}^{p}([a,b]) \to \mathcal{F}_{op}^{p}([a,b])$.

	Given real numbers $\lambda$ and $\lambda_k$s as above, let $G:=\frac{1}{\lambda}F$ and define a map $T$ on  $\mathcal{F}_{op}^{p}([a,b])$ as in \eqref{T1},
	where $\alpha$ and $\alpha_k$s are chosen as in Lemma \ref{L1}. 	Then, by using the assumptions on $\lambda$ and $\lambda_k$s, we see that  $\alpha$ and $\alpha_k$s satisfy \eqref{alphak ineuality}. Also, since $\lambda>0$, clearly $G$ is an order-preserving  measurable  $p$-integrable map on $[a,b]$. Further, since $F$ is a self-map of the complete lattice $[a,b]$, we have $\inf F \in [a,b]$, and therefore \eqref{G} is satisfied for $\le$. 	This implies that $G(x) \in [a,b]$  for all $x\in [a,b]$, since by assumption $\frac{1}{\lambda} \sup F\in [a,b]$ and $[a,b]$ is convex.
	Therefore $G\in \mathcal{F}_{op}^{p}([a,b])$.
	
	Consider an arbitrary $f\in \mathcal{F}_{op}^{p}([a,b])$. 	By assumption, $f$ is  measurable. Also, since $f^k$ is order-preserving, it is  measurable on $[a,b]$ for $2\le k\le m$. Further, since $f$ is $p$-integrable on $[a,b]$, so is $f^k$ for $2\le k\le m$. Therefore $Tf$ is a  measurable $p$-integrable map on $[a,b]$, being the nonnegative linear combination of  measurable $p$-integrable maps $f^k$ and $G$ for $1\le k\le m$. 	Also, since $\mathcal{F}^{p}_{op}([a,b])\subseteq \mathcal{F}_{op}([a,b])$, by using Step 1  of the proof of Theorem \ref{Thm1}, we have $Tf \in \mathcal{F}_{op}([a,b])$ and $T$ is order-preserving.
	Therefore $T$ is an order-preserving self-map on $\mathcal{F}^{p}_{op}([a,b])$.
	%%%%%%%%%%%%%%%%%%%%%%%%%%%%%%%%%%%%%%%
	
	\noindent
	{Step 2.} Prove that  $\mathcal{F}^{p}_{op}([a,b])$ is a complete lattice in the partial order $\trianglelefteq$.
	
	Consider an arbitrary subset $\mathcal{E}$ of $\mathcal{F}^{p}_{op}([a,b])$. We discuss in the two cases.
	
	\noindent {\bf Case (i):} If $\mathcal{E}=\emptyset$, then the constant map $\Psi:[a,b]\to [a,b]$ defined by $\Psi(x)=b$
	is the infimum of $\mathcal{E}$ in $\mathcal{F}^{p}_{op}([a,b])$.
	
	\noindent {\bf Case (ii):} If $\mathcal{E}\ne \emptyset$, then we assert that the map $\phi:[a,b]\to [a,b]$ defined by
	$\phi(x)=\inf\{f(x): f \in \mathcal{E}\}$ is the infimum
	of $\mathcal{E}$ in $\mathcal{F}^{p}_{op}([a,b])$.  Clearly, $\phi$ is the infimum of $\mathcal{E}$ in $\mathcal{F}_{op}([a,b])$.
	Also, for each $\rho \in \mathbb{R}$, we have
	\begin{eqnarray*}
	\{x\in [a,b]: \phi(x)>\rho\}=\bigcup_{f\in \mathcal{E}}\{x\in [a,b]:f(x)>\rho\}.
	\end{eqnarray*}
Therefore the measurability of maps $f\in \mathcal{E}$ implies that of $\phi$. Further,
being a bounded measurable map on the measurable set $[a,b]$ of finite measure,
$|f|^p$ is integrable on $[a,b]$
 by Proposition 3 of \cite[p.79]{royden1988}.
%%
%\footnote{\color{red} [CG210113] Here we use the following standard result:
%Let $f$ be a bounded function on a measurable set $E$ of finite measure. Then $f$ is measurable if and only if it is Lebesgue integrable.
%
%[CG210331] Citation included.}
%%
Therefore $f$ is $p$-integrable on $[a,b]$. Hence $\phi \in \mathcal{F}^{p}_{op}([a,b])$.

Thus every subset of $\mathcal{F}^{p}_{op}([a,b])$ has the infimum in $\mathcal{F}^{p}_{op}([a,b])$.
Therefore, by Lemma $14$ of \cite{Gratzer1978},
which says that if every subset of a poset $P$ has the infimum in $P$ then $P$ is complete,
	we know that $\mathcal{F}^{p}_{op}([a,b])$ is a complete lattice.

	%%%%%%%%%%%%%%%%%%%%%%%%%%%%%%%%%%%%%%%%%%%%%%%%%%%%%%%%%%%%%%%%%%%%
	
	\noindent
	{\it Step 3.} Prove that	
	$\mathcal{S}^{p}_{op}([a,b])$ is a non-empty complete sublattice of $\mathcal{F}^{p}_{op}([a,b])$.

	From Steps 1 and 2, we see that $T$ is an order-preserving self-map of the complete lattice  $\mathcal{F}^{p}_{op}([a,b])$. Hence, by Lemma \ref{L0}, the set of all fixed points of $T$ in $\mathcal{F}^{p}_{op}([a,b])$ is a non-empty complete sublattice of $\mathcal{F}^{p}_{op}([a,b])$. This implies by Lemma \ref{L1} that the set of all solutions of  \eqref{2}, and hence that of \eqref{1'} in $\mathcal{F}^{p}_{op}([a,b])$ is a non-empty complete sublattice of $\mathcal{F}^{p}_{op}([a,b])$,  because $G=\frac{1}{\lambda}F$.
	That is, $\mathcal{S}^{p}_{op}([a,b])$
	is a non-empty complete sublattice of $\mathcal{F}^{p}_{op}([a,b])$.
	
	In particular, \eqref{1'} has the minimum solution $f_*$ and the maximum solution $f^*$ in $\mathcal{F}^{p}_{op}([a,b])$, which are in fact $\min \mathcal{S}^{p}_{op}([a,b])$ and $\max \mathcal{S}^{p}_{op}([a,b])$, respectively.
	Further, by Lemma \ref{L0}, we have $f_*=\inf\{f\in \mathcal{F}_{op}^{p}([a,b]):  Tf \trianglelefteq  f\}$ and $f^*=\sup\{f\in \mathcal{F}_{op}^{p}([a,b]): f\trianglelefteq  Tf\}$.
	This completes the proof.
\end{proof}

We have the following results on uniqueness of solutions.

\begin{coro}\label{Thm7}
	Let $\lambda_1>0$, $\lambda_k\ge 0$ for $2\le k\le m$, and $F\in \mathcal{F}_{op}^{p}([a,b])$.
	\begin{description}
		\item[(i)] If $f, g \in \mathcal{F}_{op}^{p}([a,b])$ satisfy \eqref{1'} on $[a,b]$ such that 	$f\trianglelefteq g$, then $f=g$ on $[a,b]$.
		
		\item[(ii)]  If $f, g \in \mathcal{F}_{op}^{p}([a,b])$ are solutions of \eqref{1'} on $[a,b]$ such that $f$ subcommutes with $g$ and $f(x)\prec g(x)$ for some $x\in [a,b]$, then $f=g$ on $[a,b]$.
		
		\item[(iii)] 	If $f, g \in \mathcal{F}_{op}^{p}([a,b])$ satisfy \eqref{1'} on $[a,b]$ such that $f\circ g=g\circ f$, then $f=g$ on $[a,b]$.
	\end{description}
\end{coro}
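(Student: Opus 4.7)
The plan is to reduce each of the three assertions directly to the corresponding part of Theorem \ref{Thm3} by exploiting the inclusion $\mathcal{F}_{op}^{p}([a,b]) \subseteq \mathcal{F}_{op}([a,b])$. First I would note that $\mathbb{R}$, endowed with its usual addition, scalar multiplication and order $\le$, is a Riesz space, and that the compact interval $[a,b]$ is a convex complete sublattice of $\mathbb{R}$. Thus Theorem \ref{Thm3} is available with the choice $X = \mathbb{R}$ and $K = [a,b]$.

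For parts \textbf{(i)} and \textbf{(ii)}, I would pick any $f, g \in \mathcal{F}_{op}^{p}([a,b])$ satisfying the relevant hypothesis (namely $f \trianglelefteq g$ in \textbf{(i)}, or $f$ subcommuting with $g$ together with $f(x) \prec g(x)$ at some $x \in [a,b]$ in \textbf{(ii)}), and observe that since $f, g$ automatically belong to $\mathcal{F}_{op}([a,b])$, the conclusion $f = g$ on $[a,b]$ follows verbatim from the corresponding part of Theorem \ref{Thm3}. No measure-theoretic ingredient is invoked here: the integrability and measurability of $f$ and $g$ play no role in the pointwise uniqueness argument.

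For part \textbf{(iii)}, the extra ingredient I would use is that $[a,b]$ is a \emph{chain} under $\le$, since $\mathbb{R}$ is totally ordered. This puts us in the setting where Theorem \ref{Thm3}\,\textbf{(iii)} applies: given $f, g \in \mathcal{F}_{op}^{p}([a,b]) \subseteq \mathcal{F}_{op}([a,b])$ with $f \circ g = g \circ f$, Theorem \ref{Thm3}\,\textbf{(iii)} yields $f = g$ on $[a,b]$.

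Since everything is an immediate inheritance from Theorem \ref{Thm3}, there is essentially no obstacle; the only things to verify are the inclusion $\mathcal{F}_{op}^{p}([a,b]) \subseteq \mathcal{F}_{op}([a,b])$ (which is built into the definition) and the fact that $[a,b]$ is a chain (which is immediate from the total order of $\mathbb{R}$). Thus the proof will be a one-line deduction citing Theorem \ref{Thm3}, exactly analogous to the proof given for Corollary \ref{Thm5}.
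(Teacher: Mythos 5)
Your proposal is correct and coincides with the paper's own proof, which is the one-line deduction ``Follows from Theorem \ref{Thm3}, since $\mathcal{F}_{op}^{p}([a,b]) \subseteq \mathcal{F}_{op}([a,b])$.'' Your additional explicit check that $[a,b]$ is a chain (needed for part \textbf{(iii)} of Theorem \ref{Thm3}) is a sensible detail that the paper leaves implicit.
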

\begin{proof}
Follows from Theorem \ref{Thm3}, since $\mathcal{F}_{op}^{p}([a,b]) \subseteq \mathcal{F}_{op}([a,b])$.
\end{proof}

%%%%%%%%%%%%%%%%%%%%%%%%%%%%%%%%%%%%%%%%%%%%%%%%%%%%%%%%%%%%%%%%%%%%%%%%

\section{Examples and remarks}\label{sec6}

The following examples demonstrate our main theorems.

%%%%%%%%%%%%%%%%%%%%%%%%%%%%%%%%%%%%%%%%%%%%%%%%%%%%%%

\begin{exmp}\label{E1}
{\rm	Consider the functional equation
	\begin{eqnarray}\label{Ex1}
	\frac{4}{5}f(x_1, x_2)-\frac{1}{10}f^2(x_1,x_2)=F(x_1,x_2),
	\end{eqnarray}
	where $F:[0,1]^2 \to [0,1]^2$ is defined by
	\begin{eqnarray*}
	F(x_1, x_2)=\left(\frac{x_1^2}{2}, \frac{x_1+x_2}{3}\right),\quad \forall (x_1, x_2)\in [0,1]^2.
	\end{eqnarray*}
 Then $\lambda_1=0.8<1$, $\lambda_2=-0.1<0$ such that $\lambda=\lambda_1+\lambda_2=0.7>0$ and $\frac{1}{\lambda} \sup F=\frac{1}{0.7}(\frac{1}{2}, \frac{2}{3})=(\frac{1}{1.4}, \frac{2}{2.1}) \in [0,1]^2$.
Also, $[0,1]^2$ is a convex complete  sublattice of $\mathbb{R}^2$ in the partial order $\preceq$ defined by $(x_1, x_2)\preceq (y_1, y_2)$ if
$x_k\le y_k$ for $k=1,2$. Further, it is easy to see that $F$ is order-preserving on $[0,1]^2$. Thus, all the
hypotheses of Theorem \ref{Thm1} are satisfied. Hence \eqref{Ex1} has a solution in $\mathcal{F}_{op}([0,1]^2)$.
}
\end{exmp}

%%%%%%%%%%%%%%%%%%%%%%%%%%%%%%%%%%%%%

\begin{exmp}
	{\rm	Consider the functional equation
		\begin{eqnarray}\label{Ex2}
		\frac{3}{4}f(x)-\frac{1}{5}f^2(x)=F(x),
		\end{eqnarray}
		where $F:[0,1] \to [0,1]$ is defined by $F(x)=\frac{x^3}{3}$.
	 Then $\lambda_1=0.75<1$, $\lambda_2=-0.2<0$ such that $\lambda=\lambda_1+\lambda_2=0.55>0$ and $\frac{1}{\lambda}\sup F=\frac{1}{\lambda} F(1)=\frac{1}{0.55}\cdot\frac{1}{3} =\frac{1}{1.65} \in [0,1]$.
		Also, $[0,1]$ is a convex complete sublattice of $\mathbb{R}$ in the usual order $\le$. Further,  $F$ is a continuous order-preserving  map on $[0,1]$. Thus, all the
		hypotheses of Theorem \ref{Thm2} are satisfied. Hence \eqref{Ex2} has a solution in $\mathcal{F}^{usc}_{op}([0,1])$.
	}
\end{exmp}

\begin{exmp}
	{\rm	Consider the functional equation
		\begin{eqnarray}\label{Ex4}
			\frac{7}{8}f(x)-\frac{1}{6}f^2(x)=F(x),
		\end{eqnarray}
		where $F:[0,1] \to [0,1]$ is defined by $F(x)=\frac{x^2}{4}$.
		Then $\lambda_1<1$, $\lambda_2<0$ such that $\lambda=\lambda_1+\lambda_2=\frac{17}{24}>0$ and $\frac{1}{\lambda}\sup F=\frac{1}{\lambda} F(1)=\frac{24}{17}\cdot\frac{1}{4} =\frac{6}{17} \in [0,1]$.
		Also,  $F$ is a order-preserving  map on $[0,1]$. Further, $F$ is  measurable, being a continuous map on $[0,1]$. Moreover, $\int_{0}^{1} |f|^3d\mu=\int_{0}^{1}\frac{x^6}{64} dx= \frac{1}{448}$, and therefore $F$ is $3$-integrable on $[0,1]$. Thus, all the
		hypotheses of Theorem \ref{Thm6} are satisfied. Hence \eqref{Ex4} has a solution in $\mathcal{F}^{3}_{op}([0,1])$.
	}
\end{exmp}

Remark that, since it is assumed that $0< \lambda_1\le 1$ in Theorems \ref{Thm1}, \ref{Thm2} and \ref{Thm6}
we cannot use these theorems to solve the iterative root problem $f^m=F$.
Besides, as remarked in the end of section \ref{sec22},
our current approach is not applicable to order-reversing cases. We leave these problems open for further investigation.

%%%%%%%%%%%%%%%%%%%%%%%%%%%%%%%%%%%%%%%%%%%%%%%%%%%%%%%%%%%%%%%%%%%%%%%%

%%%\section{Application to more cases}\label{sec5}

In addition to $\mathbb{R}^n$ and $\mathbb{R}$ considered in sections 4 and 5,
we can find more examples (\cite{Luxemburg1971}) of Riesz spaces across different branches of mathematics,
to which our results can be applied.

\begin{exmp}
{\rm
Let $\mathcal{G}(Y,\mathbb{R})$ be the real vector space of all real valued functions on
an arbitrary non-empty set $Y$ with the pointwise addition and the real scalar product.
Then it is a Riesz space in the order $\trianglelefteq$  defined by the inequality %\eqref{01}  for each $x$.
\begin{eqnarray}\label{pointwise order}
f\trianglelefteq g\quad \text{if}\quad f(x)\le g(x),~~ \text{for each}~x.
\end{eqnarray}
The subspace $\mathcal{G}_b(Y, \mathbb{R})$ of all real bounded functions on $Y$
is also a Riesz space in the same order.
In particular,
in the case that
$Y$ consists of $n$ points, where $n\in \mathbb{N}$,
the Riesz space
$\mathcal{G}(Y,\mathbb{R})$
 indeed
%the example in (ii), i.e.,
is the space $\mathbb{R}^n$ with coordinate-wise ordering.
In the case that $Y$ consists of a countably infinite number of points,
$\mathcal{G}(Y,\mathbb{R})$
is the sequence space $\mathbb{R}^\omega$ of all real sequences.
$\mathcal{G}_b(Y, \mathbb{R})$ is the subspace $l_\infty$ of all bounded real sequences.
Additionally,
the subspace ${\bf c}$ of all convergent sequences in $l_\infty$ and
the subspace ${\bf c}_0$ of all sequences in ${\bf c}$ whose limit is zero
are both Riesz spaces.
The spaces $l_\infty$, ${\bf c}$ and ${\bf c}_0$  are all Banach spaces
equipped with the norm $\|(x_k)\|_\infty=\sup\{|x_k|: k\in \mathbb{N}\}$.
}
\end{exmp}

%\begin{exmp}\label{exmp2}
%	{\rm	
Let $X$ be the Riesz space  $\mathcal{G}([0,1],\mathbb{R})$  and
		 $K=\mathcal{G}([0,1],[0,1])$.
		 Then it is easy to check that $K$ is a convex complete sublattice of $X$.
		Consider the functional equation
		\begin{eqnarray}\label{Ex3}
		\frac{3}{4}f(\phi)-\frac{1}{4}f^2(\phi)=F(\phi),
		\end{eqnarray}
	on $K$,	where $F:K \to K$ is defined by
		\begin{eqnarray*}
			F(\phi)=\frac{1}{5}({\rm id}+\phi),\quad \forall \phi \in K.
		\end{eqnarray*}
	 Then $\lambda_1=0.75<1$, $\lambda_2=-0.25<0$ such that $\lambda=\lambda_1+\lambda_2=0.5>0$ and $\frac{1}{\lambda} \sup F=\frac{1}{0.5}\cdot\frac{{\rm id}+\phi_b}{5} =\frac{2}{5}({\rm id}+\phi_b) \in K$, where $\phi_b$ is the constant map on $[0,1]$ defined by $\phi_b(x)=b$. Further, $F$ is  order-preserving
	 map on $K$.
	 Thus, all the
	hypotheses of Theorem \ref{Thm1} are satisfied. Hence \eqref{Ex3} has a solution  in $\mathcal{F}_{op}(K)$.

%}
%\end{exmp}

Similarly to the above,
we can consider the following.

\begin{exmp}
{\rm
 Let $l_p$ ($1\le p<\infty$) be the real vector space of all real sequences $(x_k)$ such that $\sum_{k=1}^{\infty}|x_k|^p<\infty$, with the coordinate-wise addition and the real scalar product. Then it is a Riesz space in the order $\preceq$
 defined by the inequality $(x_k)\preceq(y_k)$ if $x_k\le y_k$ for every $k\in \mathbb{N}$.
 Remark that the space $l_p$ ($1\le p<\infty$) is in fact a Banach space equipped with the norm
%\begin{eqnarray*}
$\|(x_k)\|_p=\left(\sum_{k=1}^{\infty}|x_k|^p\right)^{\frac{1}{p}}$.
%\end{eqnarray*}
}
\end{exmp}

\begin{exmp}
{\rm
Let $\mathcal{C}(Y, \mathbb{R})$ be the real vector space of all real continuous functions on a nonempty topological space $Y$
with the pointwise addition and the real scalar product.
Then it is a Riesz space in the order $\trianglelefteq$ defined as in \eqref{pointwise order}.
The subspace $\mathcal{C}_b(Y, \mathbb{R})$ of all real bounded continuous functions on $Y$ is also a Riesz space
in the same order.
If $Y$ is a locally-compact space, then
the subspace $\mathcal{C}_c(Y,\mathbb{R})$ of all real continuous functions on $Y$ with compact support is a Riesz space.
}
\end{exmp}

\begin{exmp}
{\rm
Let $(Y, \Lambda,\mu)$ be a measure space, i.e.,
$\mu$ is a countably additive non-negative measure on the $\sigma$-field $\Lambda$ of subsets of the non-empty set $Y$.
Neglecting the measure zero sets,
we identify $\Lambda$ with the Boolean
measure algebra $\Lambda/\Lambda_0$, where $\Lambda_0$ is the ideal of measure zero sets.
Let ${\cal M}:={\cal M}(Y,\mu)$ denote the real vector space of all real $\mu$-almost everywhere finite valued $\mu$-measurable functions on $Y$,
with identification of $\mu$-almost equal functions, in the pointwise addition and the real scalar product.
Then ${\cal M}$ is a Riesz space in the order $\trianglelefteq$ defined by the inequality
$f\trianglelefteq g$ if $f(x)\le g(x)$ for $\mu$-almost every $x\in Y$.
More concretely,	
let $p$ be a real number such that $0<p<\infty$ and $L_p=L_p(Y,\mu)$ consist of all $f\in {\cal M}$ satisfying
%\begin{eqnarray*}
$\int_{Y}^{}|f|^pd\mu<\infty.$
%	\end{eqnarray*}
Then $L_p$ is a Riesz space in the order inherited from ${\cal M}$.
Further, the space $L_\infty$ consisting of all essentially bounded $f\in {\cal M}$
is also a Riesz space.
Remark that the spaces $L_p$, $1\le p\le \infty$, are all normed linear spaces equipped with
the norm
	\begin{eqnarray*}
		\|f\|_p=\left\{\begin{array}{ll}
		\left(\displaystyle\int_{Y}^{}|f|^pd\mu\right)^{\frac{1}{p}}	& \text{for}~1\le p<\infty,\\
		{\rm ess~sup}~|f(x)|& \text{for}~ p=\infty.
		\end{array}
		\right.
\end{eqnarray*}
}
\end{exmp}

\begin{exmp}
{\rm
Let $\mathcal{M}(Y)$ be the set of all finitely additive signed measures $\mu$ on the algebra $\mathcal{A}$
of subsets of a nonempty set $Y$ such that $\|\mu\|:=\sup\{|\mu(A)|:A\in \mathcal{A}\}$ is finite.
Then $\mathcal{M}(Y)$ is a Riesz space under the natural definitions of addition and
scalar product
in the order $\preceq$ defined by the inequality $\mu_1\preceq \mu_2$ if $\mu_1(A)\le \mu_2(A)$ for every $A\in \mathcal{A}$.
}
\end{exmp}

\begin{exmp}
{\rm
Let $H$ be a complex Hilbert space  with inner product $(\cdot, \cdot)$ and $\mathcal{H}$ denote the real vector space of
all bounded Hermitian operators on $H$. Then $\mathcal{H}$ is an ordered vector space
in the order $\preceq$
defined by the inequality $A\preceq B$ if $(Ax,x)\le (Bx, x)$ for each $x\in H$.
Although
%%It can be proved that
$\mathcal{H}$ is not a Riesz space unless in the trivial case that $H$ is one-dimensional,
 many subspaces of $\mathcal{H}$ are Riesz spaces in the order inherited from $\mathcal{H}$.
Concretely,
let $\mathcal{D}$ be a nonempty subset of $\mathcal{H}$ such that
elements of $\mathcal{D}$ are mutually commuting (i.e., $AB=BA$ for all $A, B \in \mathcal{D}$),
$\mathcal{H}'$ consist of all elements of $\mathcal{H}$ which commute with $\mathcal{D}$
(i.e., each elements of $\mathcal{H}'$ commutes with each element of $\mathcal{D}$),
and $\mathcal{H}''$ consist of all elements of $\mathcal{H}$ which commute with $\mathcal{H}'$.
Then $\mathcal{H}''$ is a Riesz space such that $\mathcal{D}\subseteq \mathcal{H}'' \subseteq \mathcal{H}'$.
}
\end{exmp}

\begin{exmp}
{\rm
Let $\Omega$ be a region in the plane and $\Lambda$ be the ordered vector space of all functions $f(x,y)$ on $\Omega$
such that $f=u_1-u_2$ with
$u_1, u_2$
non-negative
and
harmonic
(i.e., $\Delta u_i=0, i=1,2$)
in $\Omega$,
where the order is the partial order $\trianglelefteq$ defined as in \eqref{pointwise order}.
Then $(\Lambda, \trianglelefteq)$ is a Riesz space.
}
\end{exmp}

Similarly to upper-semi-continuity considered in Corollary \ref{Thm5}, we can also consider another property $P$,
which can be, for instance, continuity, differentiability, integrability, measurability, or
the harmonicity in the sense of example 10
on a Riesz space $X$ in the above list of special cases.
Note that
Theorem \ref{Thm3} is true if $K$ is any sublattice of $X$
because we do not really use the convexity and completeness assumptions on $K$ while proving it.
Thus,
let $K$ be any sublattice of $X$ and
$$
\mathcal{F}^{P}_{op}(K):=\{f\in \mathcal{F}_{op}(K): f~\text{satisfies property P on}~K\}.
$$
Then Corollary \ref{Thm5} is indeed true whenever $\mathbb{R}^n$ is replaced by $X$ and $\mathcal{F}^{usc}_{op}(K)$  by $\mathcal{F}^{P}_{op}(K)$
since $\mathcal{F}^{P}_{op}(K) \subseteq \mathcal{F}_{op}(K)$.

\noindent {\bf Acknowledgment:}
The author Chaitanya Gopalakrishna is supported by Indian Statistical Institute
Bangalore in the form of a Visiting Scientist position through the J. C. Bose Fellowship
of Prof. B. V. Rajarama Bhat. The author is very grateful to Prof. S. Parameshwara Bhatta for his useful discussions.
The author Weinian Zhang would like to give his thanks to NNSFC for grants \# 11831012, \# 11771307 and \# 11821001.

%%%%%%%%%%%%%%%%%%%%%%%%%%%%%%%%%%%%%%%%%%%%%%%%%%%%%%%%%%%%%%%%%%%%%%%%%%%%%%%%%%%%%%%%%%%%%%%%%%%%%

%****************************** references

%\hrule
%
%We have the following four cases depending on the monotonicity of $F$ and the corresponding solution $f$ of \eqref{1'}.
%
%\begin{itemize}
%	\item[(i)] Both $F$ and $f$ are order-preserving.
%	
%	\item[(ii)] $F$ is order-preserving and $f$ is order-reversing.
%	
%	\item[(iii)] $F$ is order-reversing and $f$ is order-preserving.
%	
%	\item [(iv)] Both $F$ and $f$ are order-reversing.
%\end{itemize}
%
%$\bullet$ The case (i) is discussed in this paper.
%
%$\bullet$ Now, consider either of the cases (ii) or (iv). Our strategy is to find a fixed point of $T$ in $\mathcal{F}_{or}(K)$ (the set of all order-reversing self-maps of $K$). For this we consider the map $T$ on $\mathcal{F}_{or}(K)$ defined in \eqref{T} and {\bf try to prove that} $T: \mathcal{F}_{or}(K) \to \mathcal{F}_{or}(K)$ is order-reversing. In order to prove that $T$ is a self-map of $\mathcal{F}_{or}(K)$, we require that  $\alpha_1f(x)+\alpha_2f^2(x)+\cdots+ \alpha_mf^m(x)+\alpha_0F(x)$ is a convex linear combination for each $x\in K$, which demand that $\alpha_k\ge 0$ for $1\le k\le m$.

%%%%%%%%%%%%%%%%%%%%%%%%%%%%%%%%%%%%%%%%%%%%%%%%%%%%%%%%%%%%
%%%%%%%%%%%%%%%%%%%%%%%%%%%%%%%%%%%%%%%%%%%%%%%%%%%%%%%%%%%%

\end{document}